\theoremstyle{plain}
\newtheorem{theorem}{Theorem}[section]
\newtheorem{lemma}[theorem]{Lemma}
\newtheorem{corollary}[theorem]{Corollary}
\newtheorem{proposition}[theorem]{Proposition}
\newtheorem{definition}[theorem]{Definition}
\newtheorem{conjecture}[theorem]{Conjecture}
\newtheorem{example}[theorem]{Example}
\newtheorem*{theorem*}{Theorem}
\theoremstyle{remark}
\newtheorem{remark}[theorem]{Remark}
\def\Z{\mathbb Z}
\def\C{\mathbb{C}}
\def\R{\mathbb{R}}
\def\bR{{\bf R}}
\def\id{\mathrm{id}}
\def\N{\mathbb{N}}
\def\bC{{\bf C}}
\title[Real Closed Separation Theorems]{Real Closed Separation Theorems and Applications to Group Algebras}
\author{Tim Netzer}
\author{Andreas Thom}
\begin{document}

\onehalfspace

\begin{abstract} In this paper we prove a strong Hahn-Banach theorem: separation  of disjoint convex sets by linear forms is possible without any further conditions, if the target field $\R$ is replaced by a more general real closed extension field. From this we deduce a general Positivstellensatz for $*$-algebras, involving representations over real closed fields. We investigate the class of group algebras in more detail. We show that the cone of sums of squares in the augmentation ideal has an interior point if and only if the first cohomology vanishes. For groups with Kazhdan's property (T) the result can be strengthened to interior points in the $\ell^1$-metric. We finally reprove some  strong Positivstellens\"atze by Helton and  Schm\"udgen, using our separation method. \end{abstract}

\maketitle

\tableofcontents

\section{Introduction}

In this article we combine techniques from real algebraic geometry, convex geometry, and the unitary representation theory of discrete groups to address various problems which arise in the emerging field of non-commutative real algebraic geometry \cite{MR2500470}. Classical results -- like Artin's solution of Hilbert's 17$^{\rm th}$ problem -- strive for a characterization of natural notions of positivity in terms of algebraic certificates. For example, Artin proved that every polynomial in $n$ variables, which is positive at every point on $\R^n$, must be a sum of squares of rational functions. Much later, Schm\"udgen \cite{schm2} proved that a strictly positive polynomial on a compact semi-algebraic set must be a sum of squares of polynomials plus defining inequalities. More recently, similar questions were asked in a non-commutative context. Typically, the setup involves a $\ast$-algebra $A$ and a family of representations $\mathcal F$. The question is now: Is every self-adjoint element of $A$, which is positive (semi-)definite in every  represention in $\mathcal F$ necessarily of the form $\sum_i a_i^*a_i$ for some $a_i \in A$? 
It turned out -- similar to the more classical commutative case -- that the cone $\Sigma^2 A = \{ \sum_i a_i^*a_i \mid a_i \in A \} \subset A$ is an interesting object of study in itself. Natural questions are:  Is $\Sigma^2 (A) \cap (- \Sigma^2 A) = \{0\}$?
 Is $\Sigma^2 A$ closed in a natural topology? Does it contain interior points?

The question for interior points of cones has the following motivation. If a cone $C$ has an (algebraic) interior point $q$, then for every point $a\in C^{\vee\vee}$ from the double dual, one has $a+\epsilon q\in C,$ for all $\epsilon >0 $ (see for example Proposition 1.3 in \cite{CMN} for a proof of  this well-known fact). Using the standard GNS-construction, this yields the following Positivstellensatz for unital $*$-algebras: \begin{theorem*} Assume  that $q$ is an interior point of the cone $\Sigma^2 A$. If $a=a^*\in A$ is positive semidefinite in each $*$-representation of $A$, then $a+\epsilon q\in \Sigma^2 A$ for all $\epsilon >0.$
\end{theorem*}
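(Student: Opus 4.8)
The plan is to reduce the statement to the two ingredients the authors have just put on the table: the double-dual fact quoted from \cite{CMN} and the GNS construction. Write $C = \Sigma^2 A$ for the sum-of-squares cone inside the real vector space $V$ of self-adjoint elements of $A$. The quoted fact says that once $q$ is an (algebraic) interior point of $C$, every element $a$ of the double dual cone $C^{\vee\vee}$ already satisfies $a + \epsilon q \in C$ for all $\epsilon > 0$. So the entire content of the theorem is to verify that the hypothesis on $a$ forces $a \in C^{\vee\vee}$; after that the proof is immediate.

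First I would unwind the dual cone. By definition, $C^\vee$ consists of the real linear functionals $\phi$ on $V$ with $\phi(c) \geq 0$ for all $c \in C$, and since $C$ is generated by the elements $b^*b$, this is exactly the set of positive functionals, $\phi(b^*b) \geq 0$ for all $b \in A$. (That such a functional on the self-adjoint part extends to a positive $*$-functional on all of $A$ via the decomposition $b = b_1 + i b_2$ is routine and I would treat it as such.) The double dual is then $C^{\vee\vee} = \{a : \phi(a) \geq 0 \text{ for every positive } \phi\}$, so the goal reduces to showing: if $a = a^*$ is positive semidefinite in every $*$-representation, then $\phi(a) \geq 0$ for every positive functional $\phi$.

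This last implication is precisely what GNS delivers. Given a positive functional $\phi$, the sesquilinear form $\langle b, c\rangle := \phi(c^*b)$ is positive semidefinite; quotienting by its radical and letting $A$ act by left multiplication produces a $*$-representation $\pi_\phi$ on an inner-product space with cyclic vector $\xi_\phi = [1]$ (here I use that $A$ is unital), satisfying $\phi(b) = \langle \pi_\phi(b)\xi_\phi, \xi_\phi\rangle$. Taking $b = a$ and invoking the hypothesis that $\pi_\phi(a)$ is positive semidefinite yields $\phi(a) = \langle \pi_\phi(a)\xi_\phi, \xi_\phi\rangle \geq 0$. As $\phi \in C^\vee$ was arbitrary, $a \in C^{\vee\vee}$, and the quoted fact then gives $a + \epsilon q \in C = \Sigma^2 A$ for all $\epsilon > 0$.

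The only genuine point of care — and where I expect the sole (minor) obstacle to lie — is the precise category of representations. For a general unital $*$-algebra the GNS operators $\pi_\phi(b)$ need not be bounded, so ``positive semidefinite in each $*$-representation'' must be read as $\langle \pi(a)\xi, \xi\rangle \geq 0$ for all $\xi$ in a common dense domain on which $a = a^*$ acts symmetrically. I would therefore fix conventions so that the $*$-representations appearing in the hypothesis are exactly representations by adjointable operators on inner-product spaces, guaranteeing that the GNS representation $\pi_\phi$ qualifies and that evaluating $\pi_\phi(a)$ against the cyclic vector $\xi_\phi$ is legitimate. With that convention in place the argument is entirely formal.
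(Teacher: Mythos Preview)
Your proposal is correct and follows exactly the approach the paper indicates: the introduction does not give a detailed proof but states that the theorem follows from the double-dual fact (Proposition~1.3 in \cite{CMN}) together with the standard GNS-construction, which is precisely your argument. Your care about the category of representations is appropriate and consistent with how the paper later handles GNS in Section~\ref{sec:kaz}.
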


Our first main result is a different Positivstellensatz (Theorem \ref{pos}): we prove that each element from a real reduced unital $*$-algebra, which is positive in every generalized representation, is necessarily in $\Sigma^2 A$.  The notion of a \textit{generalized representation} involves an extension of the standard real and complex numbers to more general real- and algebraically closed fields.

A natural and vast class of examples of $*$-algebras is given by complex group algebras $\C[\Gamma]$ of discrete countable groups. We study the cones $\Sigma^2 \C[\Gamma]$ and $\Sigma^2 \omega(\Gamma)$ in more detail, where $\omega(\Gamma) \subset \C[\Gamma]$ denotes the augmentation ideal, see Section \ref{aug}. The situation for $\omega(\Gamma)$ is much more complicated, as the study is closely related to questions about first cohomology with unitary coefficients.We prove that $\Sigma^2 \omega(\Gamma)$ has a interior point if and only if $H_1(\Gamma,\C)=0$. The cone $\Sigma^2 \omega(\Gamma)$ has interior point in the $\ell^1$-metric if $\Gamma$ has Kazhdan's property (T), and the converse holds if $H_2(\Gamma,\C)=0$ (see Section \ref{sec:kaz}).
 In Section \ref{free}, we analyze the situation for free groups more closely and reprove Theorems of Schm\"udgen and Helton. 
\vspace{0.2cm}

Along the way we prove some new and powerful separation theorems in Sections \ref{sec:sep} and \ref{csi}. The Hahn-Banach separation theorems for convex sets only apply if additional conditions on the involved sets are imposed; sets have to be closed or have to have non-empty interiour etc. We can remove {\bf all} additional assumptions on the expense of enlarging the target $\R$ to some real closed extension of the real numbers, see Theorem \ref{separationtheorem}.

\section{A Real Closed Separation Theorem for Convex Sets}\label{sec:sep}

 Throughout  we will work with various real closed fields $\bR$ and {\bf always} assume that $\R \subset \bR$. The following is a first general separation theorem for convex cones.
 
 \begin{theorem}\label{separationtheorem}Let $V$ be an $\R$-vector space,  $C\subset V$ a convex cone and $x \not \in C$. Then there exists a real closed field $\bR$ containing $\R$, and an $\R$-linear functional $\varphi \colon V \to R$, such that 
$$\varphi(x)< 0 \quad \mbox{and} \quad \varphi(y) \geq  0 \quad \forall y \in C.$$ We can even ensure $\varphi(y)>0$ for all $y\in C\setminus\left( C\cap -C\right).$ Also, $\bR$ does only depend on $V$, not on $x$ or $C$.
\end{theorem}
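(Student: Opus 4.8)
The plan is to reduce the separation of $x$ from all of $C$ simultaneously to a family of finite-dimensional separations, and then to glue the resulting ordinary real functionals together inside an ultrapower of $\R$. Concretely, let $I$ denote the directed set of all finite subsets of $V$, ordered by inclusion, and fix once and for all an ultrafilter $\cU$ on $I$ refining the tail filter, i.e. the filter generated by the sets $\{F \in I : F \supseteq F_0\}$ for $F_0 \in I$; this choice depends only on $V$. I set $\bR := \R^I/\cU$. Since the class of real closed fields is elementary (first-order axiomatizable in the language of ordered fields), {\L}o\'{s}'s theorem guarantees that $\bR$ is again real closed, and the diagonal embedding realises $\R \subset \bR$. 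By construction $\bR$ depends only on $V$, as required, and $x \neq 0$ since we use the convention $0 \in C$.

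The core step is purely finite-dimensional. For each $F \in I$ I seek an $\R$-linear functional $\varphi_F \colon V \to \R$ (obtained on $\mathrm{span}(F \cup \{x\})$ and extended arbitrarily to $V$) with $\varphi_F(x) < 0$, with $\varphi_F(y) \geq 0$ for all $y \in F \cap C$, and with the strict inequality $\varphi_F(y) > 0$ whenever $y \in (F \cap C) \setminus (-C)$. The existence of such a functional is a feasibility statement for a finite system of strict and non-strict linear inequalities, which I would settle using a transposition theorem of Motzkin--Gordan type: the system is solvable unless there is a nontrivial nonnegative relation $\mu(-x) + \sum_{y} \lambda_y\, y + \sum_w \rho_w\, w = 0$, where the $\lambda_y$ range over the strict generators in $(F\cap C)\setminus(-C)$, the $\rho_w$ over $F \cap C$, all coefficients are nonnegative, and the strict part $(\mu, (\lambda_y))$ is not identically zero. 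I rule this out using convexity of $C$. Rewriting it as $\mu x = \sum_y \lambda_y y + \sum_w \rho_w w$ exhibits $\mu x$ as a nonnegative combination of elements of $C$, hence as an element of $C$; if $\mu > 0$ this forces $x \in C$, contrary to hypothesis. If $\mu = 0$, then some strict coefficient $\lambda_{y_0}$ is positive, and $\lambda_{y_0} y_0 = -\sum_{y \neq y_0}\lambda_y y - \sum_w \rho_w w$ lies in $-C$, whence $y_0 \in -C$, contradicting $y_0 \in (F\cap C)\setminus(-C)$. Thus each $\varphi_F$ exists.

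Finally I assemble these into $\varphi \colon V \to \bR$ by $\varphi(v) := [(\varphi_F(v))_{F \in I}]$, the class of the tuple of values in the ultrapower; linearity of $\varphi$ is inherited from the $\varphi_F$. For the sign conditions I exploit that the tail sets lie in $\cU$: since every $\varphi_F$ satisfies $\varphi_F(x) < 0$, we get $\varphi(x) < 0$; and for fixed $y \in C$ the set $\{F : y \in F\}$ is a tail, hence in $\cU$, and on it $\varphi_F(y) \geq 0$, so $\varphi(y) \geq 0$ by {\L}o\'{s}'s theorem. If moreover $y \in C \setminus (C \cap -C)$, i.e. $y \notin -C$, then on that same tail $\varphi_F(y) > 0$, whence $\varphi(y) > 0$; in particular the elements of the lineality space $C \cap -C$ are sent to $0$. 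I expect the main obstacle to be the finite-dimensional feasibility step: everything hinges on choosing the correct strict/non-strict pattern of inequalities so that the transposition alternative, combined solely with convexity of $C$ and $x \notin C$, yields a contradiction. The ultrafilter gluing and the verification that $\bR$ is real closed are then routine applications of standard model theory.
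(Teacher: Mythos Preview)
Your argument is correct, and the overall architecture---solve a ``local'' separation problem and glue via an ultrapower---matches the paper's. The difference lies in the local step. The paper works with finite-dimensional \emph{subspaces} $H\subset V$: there it separates $C\cap H$ from $x$ by building a complete flag $H=H_1\supset\cdots\supset H_n=C\cap -C$, choosing at each stage a real functional that is nonnegative on $C\cap H_i$, and then combining these with a chain of infinitesimals $1=\epsilon_1>\epsilon_2>\cdots$ to force strict positivity off the lineality space. Thus already the finite-dimensional functional takes values in a nonarchimedean real closed field, and the final $\bR$ is an ultrapower of that. You instead index by finite \emph{subsets} $F\subset V$ and use a Motzkin--Gordan transposition argument to obtain a genuine $\R$-valued $\varphi_F$ satisfying the required strict and non-strict inequalities on the finitely many points of $F$; all the nonarchimedean content is then pushed into the single ultrapower $\R^I/\cU$. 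Your route is somewhat cleaner in that $\bR$ is just an ultrapower of $\R$ and no separate finite-dimensional argument is needed, at the cost of invoking the transposition theorem. The paper's flag construction is more elementary (only the standard Hahn--Banach/supporting-hyperplane theorem is used) and makes the role of the infinitesimals explicit. Both approaches deliver the same independence of $\bR$ from $x$ and $C$.
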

\begin{proof} 
Let us first assume that $V$ is finite-dimensional. We construct a complete flag of subspaces $V=H_1\supset H_2 \supset \cdots \supset H_n=C\cap -C$, starting with $V=H_1$,  in the following way. By the standard separation theorem for convex sets (see for Example Theorem 2.9 in \cite{barvinok}) we choose a non-trivial  $\R$-linear functional $\varphi_i\colon H_i \rightarrow \R$ such that  $\varphi(y)\geq 0$ for all $y\in C\cap H_i$ and $\varphi_i(x)\leq 0$ (if $x\in H_i$). We then define $H_{i+1}:=H_i\cap\{\varphi_i=0\}$ and iterate the process. We finally extend each   $\varphi_i$ in any way to $V$.
Now let $\bR$ be a real closed extension field of $\R$. Choose positive elements $$1=\epsilon_1 > \epsilon_2> \cdots >\epsilon_{n-1}>0$$ from $\bR$ such that $k\cdot \epsilon_i < \epsilon_{i-1}$ for all $k\in\R.$ Then define $$\varphi:= \epsilon_1\varphi_1 + \cdots +\epsilon_{n-1}\varphi_{n-1}.$$
One checks that $\varphi$ has the desired properties. This proves the claim in case of finite dimension.
 
In the general case, consider the set $\mathcal{S}$ of all finite dimensional subspaces $H$ of $V$. For each $H\in \mathcal{S}$ choose $\varphi_H\colon H\rightarrow \bR$, separating $x$ from $C\cap H$ as desired (if $x\in H$). Extend $\varphi_H$ in any way to $V$.  Now let $\omega$ be an ultrafilter on $\mathcal{S}$, containing the sets $\{H\in\mathcal{S}\mid y\in H\}$, for all $y\in V$. Consider the linear functional $\varphi\colon V\rightarrow \bR^{\omega}$, $\varphi(v)=\left(\varphi_H(v)\right)_{H\in\mathcal{S}}$. Here $\bR^\omega$ denotes the ultrapower of $\bR$ with respect to $\omega$. One checks that $\varphi$ separates $x$ as desired, by the Theorem of \L os (see for example Theorem 2.2.9 in \cite{pd}).\end{proof}

\begin{remark}
In the usual way, one can  now also deduce that any two convex disjoint sets in a vector space can be separated as above with a real-closed valued affine  functional.
\end{remark}

It turns out that we can also extend functionals quite often, if we allow for an extension of the real closed field.
\begin{theorem}\label{extend} Let $V$ be an $\R$-vector space, $C\subseteq V$ a convex cone and $H\subseteq V$ a subspace. Assume $(C+H)\cap-(C+H)=H$. Then for any real closed extension  field $\bR$ of $\R$ and any $\R$-linear functional $\varphi\colon H\rightarrow \bR$ with $\varphi\geq 0$ on $C\cap H$   there is some real closed extension field $\bR'$ of $\bR$ and an $\R$-linear functional $\overline\varphi\colon V\rightarrow \bR'$ with $\overline\varphi\geq 0$ on $C$ and $\overline\varphi=\varphi$ on $H$. We can even ensure $\overline\varphi(y)>0$ for all $y\in C\setminus H$.
\end{theorem}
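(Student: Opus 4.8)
The plan is to separate the two assertions of the theorem—the existence of \emph{some} positive extension, and the \emph{strict} positivity on $C\setminus H$—and to obtain the strictness by an independent appeal to Theorem \ref{separationtheorem}. First I would reduce the strict statement to the non-strict one. Suppose we have already produced a real closed $\bR'\supseteq\bR$ and an extension $\overline\varphi_0\colon V\to\bR'$ with $\overline\varphi_0\geq 0$ on $C$ and $\overline\varphi_0=\varphi$ on $H$. The hypothesis $(C+H)\cap-(C+H)=H$ says exactly that the image cone $\bar C$ of $C$ in the quotient $V/H$ is salient, i.e. $\bar C\cap-\bar C=\{0\}$: if $c_1,c_2\in C$ have $\pi(c_1)+\pi(c_2)=0$ then $c_1+c_2\in H$, whence $c_1\in(C+H)\cap-(C+H)=H$ and $\pi(c_1)=0$. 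Applying Theorem \ref{separationtheorem} to $\bar C$ and any point outside it (if $\bar C=V/H$ then $V=H$ and there is nothing to prove) yields, over some real closed $\bR''\supseteq\bR$, a functional that is strictly positive on $\bar C\setminus\{0\}$; pulling it back along $\pi\colon V\to V/H$ gives $\psi\colon V\to\bR''$ with $\psi\geq 0$ on $C$, $\psi>0$ on $C\setminus H$, and $\psi|_H=0$. Embedding $\bR'$ and $\bR''$ over $\bR$ into a common real closed field $\bR'''$, the functional $\overline\varphi:=\overline\varphi_0+\psi$ satisfies all three requirements: it equals $\varphi$ on $H$, is $\geq 0$ on $C$, and is $>0$ on $C\setminus H$, since there $\overline\varphi_0\geq 0$ while $\psi>0$.

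It then remains to produce the non-strict extension $\overline\varphi_0$, and for this I would imitate the two-tier strategy of Theorem \ref{separationtheorem}. In the finite-dimensional case I extend $\varphi$ one dimension at a time: to pass from a subspace $H'$ carrying a positive functional $\varphi'$ to $H'+\R v$, I must choose $t=\overline\varphi_0(v)$ with $\varphi'(h)+\lambda t\geq 0$ for every $h\in H'$, $\lambda\in\R$ with $h+\lambda v\in C$. The usual computation shows this amounts to placing $t$ in the cut determined from below by $\{-\varphi'(h)/\lambda : h+\lambda v\in C,\ \lambda>0\}$ and from above by $\{\varphi'(h)/\mu : h-\mu v\in C,\ \mu>0\}$; the two families respect the cut because, for such $h_1,h_2$, the element $\mu h_1+\lambda h_2=\mu(h_1+\lambda v)+\lambda(h_2-\mu v)$ lies in $C\cap H'$, so $\mu\varphi'(h_1)+\lambda\varphi'(h_2)\geq 0$. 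Note this uses only $\varphi'\geq 0$ on $C\cap H'$, so the lineality hypothesis is not needed here—consistent with its sole role being the strictness handled above.

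The value $t$ realizing this cut need not lie in the current field, and this is the one genuinely non-elementary point: I pass to a real closed extension in which the cut is filled, for instance a sufficiently saturated elementary extension (or an ultrapower) realizing the corresponding one-variable type. After finitely many steps this yields $\overline\varphi_0$ on all of $V$, valued in some real closed $\bR'\supseteq\bR$. For general $V$ I run this on every finite-dimensional subspace $F\leq V$; the hypothesis restricts correctly, since the lineality of $(C\cap F)+(H\cap F)$ is contained in $(C+H)\cap-(C+H)\cap F=H\cap F$ and visibly contains $H\cap F$. I then assemble the pieces $\varphi_F\colon F\to\bR'_F$ through an ultrafilter $\omega$ on the finite-dimensional subspaces containing every set $\{F:y\in F\}$, exactly as in the proof of Theorem \ref{separationtheorem}: the assignment $v\mapsto(\varphi_F(v))_F$ into the ultraproduct $\prod_F\bR'_F/\omega$ is defined $\omega$-almost everywhere, is $\R$-linear and positive on $C$ by the theorem of \L os, and restricts to $\varphi$ on $H$ because $\varphi_F(h)=\varphi(h)$ whenever $h\in F$.

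I expect the main obstacle to be precisely this globalization together with the cut-realization: one must check that the per-subspace extensions, each living over its own real closed field, fuse into a single functional over one real closed field without destroying positivity or the equality on $H$. Everything else—the salient-quotient reduction, the consistency $A\leq B$ of the one-step cut, and the bookkeeping of the ultraproduct—is routine once this point is secured.
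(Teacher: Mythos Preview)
Your argument is correct but takes a considerably longer route than the paper. Both proofs obtain, via Theorem~\ref{separationtheorem}, a functional $\psi\colon V\to\widetilde\bR$ that vanishes on $H$ and is strictly positive on $C\setminus H$ (you pass to the salient image cone in $V/H$; the paper applies the theorem to $C+H$ in $V$, which amounts to the same thing). The difference lies in how the extension of $\varphi$ itself is produced. You carry out a genuine Riesz-type extension: at each one-dimensional step you verify consistency of the relevant cut, realize it in a larger real closed field, and then globalize over an ultraproduct of finite-dimensional subspaces. The paper bypasses all of this. It extends $\varphi$ to $V$ \emph{arbitrarily} as an $\bR$-linear map, passes to a real closed $\bR'\supseteq\bR$ containing an element $\delta$ with $\delta>\bR$, and sets $\overline\varphi:=\varphi+\delta\cdot\psi$. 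On $H$ this equals $\varphi$ since $\psi|_H=0$; on $C\setminus H$, since $\psi(y)>0$ and both $\varphi(y)$ and $\psi(y)$ lie in $\bR$, one has $\delta>-\varphi(y)/\psi(y)$ and hence $\overline\varphi(y)>0$. Thus a single infinitely large scalar replaces your entire inductive cut-filling and ultraproduct assembly. Your approach does surface one point the paper leaves implicit, namely that the lineality hypothesis $(C+H)\cap-(C+H)=H$ is needed only for the strict positivity on $C\setminus H$ and not for the bare existence of a positive extension; the paper's approach buys a three-line proof.
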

\begin{proof} We apply Theorem \ref{separationtheorem} to the convex cone $C+H$ in $V$, and obtain a  real closed field $\widetilde\bR$ and a nontrivial  $\R$-linear functional $\psi\colon V\rightarrow\widetilde\bR$ with $\psi=0$ on $H$ and $\psi(y)>0$ for $a\in C\setminus H.$ By amalgamation of real closed fields we can assume without loss of generality that $\bR=\widetilde\bR$. Let finally $\bR'$ be a real closed extension field of $\bR$ that contains an element $\delta>\bR$. Extend $\varphi$ to an $\bR$-valued funtional on $V$ and set $$\overline\varphi:=\varphi + \delta\cdot \psi.$$  It is clear that $\overline\varphi $ coincides with $\varphi$ on $H$ and also that $\overline\varphi(y)> 0$ for all $y\in C\setminus H$.
\end{proof}

We will improve upon the separation results in the case of certain $*$-algebras in the next section.

\section{Completely Positive Separation}\label{csi}

Throughout this section let $A$ be a $\C$-algebra with involution $*$,  not necessarily unital. We consider the cone of sums of hermitian squares $$\Sigma^2 A=\left\{ \sum_{i=1}^n a_i^*a_i\mid n\in\N, a_i\in A\right\},$$ contained in the real vector subspace of hermitian elements $$A^h=\left\{ a\in A\mid a^*=a\right\}.$$ If $b\in A^h\setminus\Sigma^2A$, we find an $\R$-linear functional  $\varphi\colon A^h\rightarrow \bR,$ into some real closed extension field $\bR$ of $\R$, such that $\varphi(b)<0$, $\varphi(a^*a)\geq 0$ for all $a\in A$, by Theorem \ref{separationtheorem} from the last section.  We can extend $\varphi$ uniquely to a $\C$-linear functional $\varphi\colon A\rightarrow \bR[i]$ fulfilling $\varphi(a^*)=\overline{\varphi(a)}$. We will denote the algebraically closed field $\bR[i]$ by $\bC$ from now on.

The condition $\varphi(a^*a)\geq 0$ for all $a\in A$  is called {\it positivity} of $\varphi$. We would now  like positive and real-closed valued functionals to fulfill the Cauchy-Schwarz inequality $$\vert\varphi(a^*b)\vert^2\leq \varphi(a^*a)\varphi(b^*b)$$  for all $a,b\in A$. This is however not true in general, as shows the next example.

\begin{example}\label{notcp} Let $A=\C[t]$ be the univariate polynomial ring, where $*$ is coefficient-wise conjugation. The cone $\Sigma^2 A$ equals the cone of nonnegative real polynomials. Consider the functional $$\varphi\colon\R[t]\rightarrow \bR, \quad p\mapsto p(0)+\epsilon p''(0),$$ where $\epsilon\in\bR $ is positive and  infinitesimal with respect to $\R$. One checks that $\varphi$ is positive, but for $a=1+t^2$ and $b=1$ we have $$\vert \varphi(a^*b)\vert^2= 1+4\epsilon + 4\epsilon^2> 1+4\epsilon = \varphi(a^*a)\varphi(b^*b).$$ 
\end{example}

\begin{example}\label{notcp2}
The last example can be modified to even fulfill $\varphi(a^*a)>0$ if $a\neq 0$. Indeed let $1=\epsilon_0 >\epsilon_1>\epsilon_2>\cdots>0$ be a sequence of elements from $\bR$, such that $\R\cdot \epsilon_i <\epsilon_{i-1}$ for all $i$. Then the linear mapping $p\mapsto\sum_{i=0}^\infty \epsilon_i \cdot p^{(2i)}(0)$ is well-defined and strictly positive in the desired sense. If we further assume  $\R\cdot \epsilon_2< \epsilon_1^2$, then the same argument as in Example \ref{notcp} shows that the Cauchy-Schwarz inequality is not fulfilled.
\end{example}

\begin{definition} A $\C$-linear functional $\varphi\colon A\rightarrow\bC$ with $\varphi(a^*)=\overline{\varphi(a)}$ is called \textit{completely positive}, if for all $m\in\N$, the componentwisely defined function $$\varphi^{(m)}\colon {\rm M}_m(A)\rightarrow {\rm M}_m(\bC)$$ maps sums of hermitian squares to positive semidefinite matrices.
\end{definition}

\begin{remark}\label{boring}
It is easily seen that a positive $\C$-linear functional $\varphi\colon A\rightarrow \C$ with $\varphi(a^*)=\overline{\varphi(a)}$ is always completely positive.
\end{remark}

\begin{example}
The functionals from Example \ref{notcp} and Example \ref{notcp2} are positive, but not completely positive. Indeed, with $a=1+t^2$ and  $M=\left(\begin{array}{cc}1 & a \\0 & 0\end{array}\right)$ we find that $$\varphi^{(2)}\left(M^*M\right)=\left(\begin{array}{cc}1 & \varphi(a) \\\varphi(a^*) & \varphi(a^*a)\end{array}\right)$$ is not positive semidefinite, since its determinant is negative in $\bR$.
\end{example}

\begin{lemma}\label{fext}
A  $\C$-linear functional $\varphi\colon A\rightarrow \bC$ with $\varphi(a^*)=\overline{\varphi(a)}$ is completely positive if and only if the $\bC$-linear extension $$\id\otimes\varphi\colon \bC\otimes_\C A\rightarrow \bC$$ is positive.
\end{lemma}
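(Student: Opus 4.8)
The plan is to unwind both sides of the equivalence and recognise that each is equivalent to one and the same condition: the positive semidefiniteness of every ``Gram matrix'' $\bigl[\varphi(a_j^*a_k)\bigr]_{j,k}\in {\rm M}_n(\bC)$ attached to a finite family $a_1,\dots,a_n\in A$. (Note that such a matrix is genuinely hermitian, since $\overline{\varphi(a_k^*a_j)}=\varphi(a_j^*a_k)$, so the notion of positive semidefiniteness over $\bC$ applies to it.)

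First I would translate positivity of $\id\otimes\varphi$. A general element of $\bC\otimes_\C A$ is a finite sum $w=\sum_j z_j\otimes a_j$ with $z_j\in\bC$, $a_j\in A$; under the involution $(z\otimes a)^*=\bar z\otimes a^*$ one computes $w^*w=\sum_{j,k}\bar z_j z_k\otimes a_j^*a_k$, so that
$$(\id\otimes\varphi)(w^*w)=\sum_{j,k}\bar z_j z_k\,\varphi(a_j^*a_k).$$
Since the cone of sums of hermitian squares in $\bC\otimes_\C A$ is generated by single squares $w^*w$, positivity of $\id\otimes\varphi$ is precisely the assertion that $\sum_{j,k}\bar z_j z_k\,\varphi(a_j^*a_k)\geq 0$ for all $n\in\N$, all $a_j\in A$ and all $z\in\bC^n$, i.e.\ that the Gram matrix $\bigl[\varphi(a_j^*a_k)\bigr]_{j,k}$ is positive semidefinite for every finite family.

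Next I would do the same for complete positivity. Because $\varphi^{(m)}$ is applied entrywise it is additive, and the positive semidefinite matrices form a cone, so $\varphi^{(m)}$ sends sums of hermitian squares into the psd cone if and only if it sends every single square $X^*X$ there. Writing $X=(x_{pq})\in {\rm M}_m(A)$, the $(j,k)$ entry of $\varphi^{(m)}(X^*X)$ is $\sum_p\varphi(x_{pj}^*x_{pk})$, whence $\varphi^{(m)}(X^*X)=\sum_p G_p$, where $G_p=\bigl[\varphi(x_{pj}^*x_{pk})\bigr]_{j,k}$ is the Gram matrix of the $p$-th row of $X$. This exhibits complete positivity as the condition that every finite sum of Gram matrices be positive semidefinite.

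With both reformulations in hand the equivalence is immediate. If $\id\otimes\varphi$ is positive, then each $G_p$ is psd and hence so is $\sum_p G_p=\varphi^{(m)}(X^*X)$, giving complete positivity. Conversely, given $a_1,\dots,a_n$, taking $X\in {\rm M}_n(A)$ whose first row is $(a_1,\dots,a_n)$ and whose remaining rows vanish yields $\varphi^{(n)}(X^*X)=\bigl[\varphi(a_j^*a_k)\bigr]_{j,k}$, so complete positivity forces this matrix to be psd, recovering positivity of $\id\otimes\varphi$. I do not expect a serious obstacle here: the content is the bookkeeping identification of both conditions with the single Gram-matrix condition. The one point that needs care is to check that the involution and the functional interact correctly on the tensor product, and that positive semidefiniteness is tested over the full field $\bC$, so that the scalars $z_j$ appearing in the tensor expansion match the test vectors $v\in\bC^m$ used in verifying positive semidefiniteness.
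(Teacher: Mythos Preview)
Your argument is correct and follows essentially the same route as the paper: both reduce each side of the equivalence to positive semidefiniteness of the Gram matrices $\bigl[\varphi(a_j^*a_k)\bigr]_{j,k}$, using that a hermitian square $X^*X$ in ${\rm M}_m(A)$ decomposes as a sum of rank-one squares coming from the rows of $X$. Your write-up is in fact a bit more explicit than the paper's about this decomposition, but the underlying idea is identical.
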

\begin{proof}
The condition that $\id\otimes\varphi$ is positive is \begin{align*} 0& \leq (\id\otimes\varphi)\left(\left(\sum_{j=1}^m z_j\otimes a_j\right)^*\left(\sum_{j=1}^m z_j\otimes a_j\right)\right)\\ &= (\id\otimes\varphi)\left(\sum_{j,k} \overline{z}_jz_k\otimes a_j^*a_k\right)\\ &= \sum_{j,k} \overline{z}_jz_k \cdot \varphi(a_j^*a_k)\end{align*} for all $m\in\N$, $z_j\in\bC, a_j\in A.$ But this just means that the matrix $( a_j^*a_k)_{j,k}$ is mapped to a positive semidefinite matrix under $\varphi^{(m)}$. Since every sum of hermitian squares in ${\rm M}_m(A)$ is a finite sums of such rank one squares, this proves the claim.
\end{proof}

\begin{corollary}
If $\varphi\colon A\rightarrow \bC$ is completely positive, then it fulfills the Cauchy-Schwarz-inequality $$\vert \varphi(a^*b)\vert^2\leq \varphi(a^*a)\varphi(b^*b)$$ for all $a,b\in A$.
\end{corollary}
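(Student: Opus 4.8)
The plan is to reduce the inequality to the positive semidefiniteness of a single $2\times2$ matrix, and then to extract the determinant condition by a test-vector argument that uses nothing beyond the ordered-field axioms, so that it stays valid over the real closed field $\bR$.

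First I would set $M = \left(\begin{smallmatrix} a & b \\ 0 & 0\end{smallmatrix}\right)\in {\rm M}_2(A)$ and note that $M^*M = \left(\begin{smallmatrix} a^*a & a^*b \\ b^*a & b^*b\end{smallmatrix}\right)$ is a hermitian square. Complete positivity of $\varphi$ therefore forces
$$H := \varphi^{(2)}(M^*M) = \begin{pmatrix} \varphi(a^*a) & \varphi(a^*b) \\ \varphi(b^*a) & \varphi(b^*b)\end{pmatrix}\in {\rm M}_2(\bC)$$
to be positive semidefinite; equivalently, this is the positivity of $\id\otimes\varphi$ on $(z_1\otimes a + z_2\otimes b)^*(z_1\otimes a + z_2\otimes b)$ furnished by Lemma \ref{fext}. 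Using $\varphi(x^*)=\overline{\varphi(x)}$ and $(a^*b)^* = b^*a$, the off-diagonal entries are complex conjugates, so $H$ is hermitian, and its diagonal entries $p := \varphi(a^*a)$ and $r := \varphi(b^*b)$ are nonnegative since complete positivity entails ordinary positivity (take $m=1$). Writing $q := \varphi(a^*b)$, the assertion is exactly $|q|^2\leq pr$.

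To deduce this from $H\geq 0$, I would evaluate the hermitian form on a concrete test vector. Taking $v = \left(\begin{smallmatrix} -q \\ p\end{smallmatrix}\right)\in\bC^2$, a direct computation gives $Hv = \left(\begin{smallmatrix} 0 \\ pr-|q|^2\end{smallmatrix}\right)$ and hence $v^*Hv = p\,(pr-|q|^2)\geq 0$. If $p>0$ this yields $pr-|q|^2\geq 0$ immediately. If $p=0$, evaluating instead on $v=\left(\begin{smallmatrix} \lambda \\ 1\end{smallmatrix}\right)$ with $\lambda = t\,q$ gives $v^*Hv = 2t|q|^2 + r$, which is negative for a suitably negative $t\in\R$ unless $q=0$; so $q=0$ and the inequality holds trivially.

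The only point needing care is that $\bR$ is merely real closed rather than $\R$, so the familiar spectral reasoning (positive semidefinite $\Rightarrow$ nonnegative eigenvalues $\Rightarrow$ nonnegative determinant) is not available verbatim. The test-vector computation sidesteps this entirely, relying only on multiplication, the relation $z\bar z = |z|^2\geq 0$, and the order on $\bR$, all of which are governed by the real closed field axioms; alternatively one may invoke the Tarski transfer principle, as ``a $2\times2$ positive semidefinite hermitian matrix has nonnegative determinant'' is a first-order statement that passes from $\R$ to $\bR$. I therefore expect no genuine obstacle: the content is the classical fact that even two-positivity implies Cauchy--Schwarz, and the real closed setting leaves the argument intact.
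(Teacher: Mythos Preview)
Your argument is correct and follows exactly the paper's second route: form $M=\left(\begin{smallmatrix} a & b \\ 0 & 0\end{smallmatrix}\right)$, apply $\varphi^{(2)}$ to $M^*M$, and read off Cauchy--Schwarz from the positive semidefiniteness of the resulting $2\times 2$ matrix. Your explicit test-vector computation over~$\bR$ is a careful unpacking of the step the paper leaves as ``use that the obtained matrix is positive semidefinite,'' so there is no substantive difference in approach.
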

\begin{proof}
Either consider the positive and $\bC$-linear extension $\id\otimes\varphi$ to $\bC\otimes_\C A$ and use the standard proof for the inequality. Or apply $\varphi^{(2)}$ to the sum of hermitian squares $$\left(\begin{array}{cc}a & b \\0 & 0\end{array}\right)^*\left(\begin{array}{cc}a & b \\0 & 0\end{array}\right)=\left(\begin{array}{cc}a^*a & a^*b \\b^*a & b^*b\end{array}\right),$$ and use that the obtained matrix is positive semidefinite. \end{proof}

\begin{remark}
We see from the last proof that in fact only the $2$-positivity of $\varphi$ is needed for the Cauchy-Schwarz-inequality.
\end{remark}

\begin{corollary}\label{gauge}
Let $A$ be a $\C$-algebra with involution and $\bR$ a real closed field which contains $\R$. Let $\varphi \colon A \to \bC$ be a completely positive $\C$-linear functional  which satisfies $\varphi(a^*) = \overline{\varphi(a)}$ for all $a\in A$. The gauge $\|a\|_{\varphi}:= \varphi(a^*a)^{1/2}$ satisfies
$$\|\lambda \cdot a\|_{\varphi} = |\lambda| \cdot \|a\|_{\varphi}$$
and 
$$\|a+b\|_{\varphi} \leq \cdot \|a\|_{\varphi} + \|b\|_{\varphi}.$$
\end{corollary}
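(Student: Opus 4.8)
The plan is to imitate the classical derivation of the triangle inequality for an $L^2$-type seminorm, taking care that every order-theoretic fact I invoke survives the passage from $\R$ to the real closed field $\bR$. Note first that $\|a\|_\varphi$ is well defined: by complete positivity $\varphi$ is in particular positive, so $\varphi(a^*a)\geq 0$ in $\bR$, and a real closed field possesses a unique nonnegative square root for each nonnegative element.

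For homogeneity I would simply compute, using $\C$-linearity of $\varphi$ and $(\lambda a)^* = \overline\lambda\, a^*$, that $\|\lambda a\|_\varphi^2 = \varphi(\overline\lambda\lambda\, a^*a) = |\lambda|^2\,\varphi(a^*a) = |\lambda|^2\,\|a\|_\varphi^2$ for $\lambda \in \C$; since both $|\lambda|$ and $\|a\|_\varphi$ are nonnegative, extracting the nonnegative square root in $\bR$ gives $\|\lambda a\|_\varphi = |\lambda|\,\|a\|_\varphi$.

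For the triangle inequality I would expand $\|a+b\|_\varphi^2 = \varphi(a^*a) + \varphi(a^*b) + \varphi(b^*a) + \varphi(b^*b)$. Applying $\varphi(x^*) = \overline{\varphi(x)}$ to $x = a^*b$ shows $\varphi(b^*a) = \overline{\varphi(a^*b)}$, so the two cross terms sum to $2\,\mathrm{Re}\,\varphi(a^*b)$. I then bound $\mathrm{Re}\,z \leq |z|$ for $z \in \bC$ and apply the Cauchy--Schwarz inequality established in the preceding corollary to obtain $\mathrm{Re}\,\varphi(a^*b)\leq |\varphi(a^*b)| \leq \|a\|_\varphi\,\|b\|_\varphi$. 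Substituting yields $\|a+b\|_\varphi^2 \leq \|a\|_\varphi^2 + 2\|a\|_\varphi\|b\|_\varphi + \|b\|_\varphi^2 = (\|a\|_\varphi + \|b\|_\varphi)^2$, and monotonicity of the square root finishes the argument.

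The only genuine point beyond routine algebra is that the target is $\bR$ rather than $\R$, so I must justify the order facts I use there: existence and uniqueness of nonnegative square roots, the monotonicity $0\leq x\leq y \Rightarrow \sqrt{x}\leq\sqrt{y}$, and $\mathrm{Re}\,z \leq |z|$ for $z = \mathrm{Re}\,z + i\,\mathrm{Im}\,z \in \bC = \bR[i]$ (which reduces to $(\mathrm{Re}\,z)^2 \leq (\mathrm{Re}\,z)^2 + (\mathrm{Im}\,z)^2$ together with monotonicity of the root). Each of these is a first-order statement in the language of ordered fields that holds in $\R$, hence holds in every real closed field by the transfer principle underlying Tarski's theorem --- the same elementary-equivalence mechanism exploited via \L os's theorem in the proof of Theorem \ref{separationtheorem}. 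With these transferred facts in place, the classical computation goes through verbatim, so I expect no substantive obstacle.
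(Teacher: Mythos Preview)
Your argument is correct and follows essentially the same route as the paper: expand $\|a+b\|_\varphi^2$, bound the cross terms by $2|\varphi(a^*b)|$, apply the Cauchy--Schwarz inequality from the preceding corollary, and recognise the resulting perfect square. The paper's proof is terser (it declares homogeneity ``obvious'' and does not comment on the passage to $\bR$), so your explicit discussion of square roots, monotonicity, and $\mathrm{Re}\,z\leq|z|$ in a real closed field is additional care rather than a different strategy.
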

\begin{proof}
The first assertion is obvious. Let's compute 
\begin{eqnarray*}
\|a+b\|^2_{\varphi} &=& \varphi((a+b)^*(a+b)) \\
&=& \varphi(a^*a) + \varphi(a^*b) + \varphi(b^*a) + \varphi(b^*b) \\
&\leq& \|a\|^2_{\varphi} + \|b\|^2_{\varphi} + 2\vert \varphi(a^*b)\vert \\
&\leq&  \|a\|_{\varphi}^2 + \|b\|_{\varphi}^2 + 2\Vert a\Vert_\varphi \Vert b\Vert_\varphi \\ 
& = & (\Vert a\Vert_\varphi + \Vert b \Vert_\varphi)^2.
\end{eqnarray*}
This proves the claim.
\end{proof}

It turns out that separation from the cone of sums of hermitian squares can often be done with a completely positive functional.

\begin{definition}
Let $A$ be a $\C$-algebra with involution, not necessarily unital. Then $A$ is called {\it real reduced} if $\sum_i a_i^*a_i=0$ implies $a_i=0$ for all $i$ and $a_i\in A$. 
\end{definition}

\begin{theorem}\label{cpsep}  Let $A$ be a $\C$-algebra with involution, that is real reduced. Let $b\in A^h\setminus\Sigma^2 A$. Then there is real closed extension field $\bR$ of $\R$ and a completely positive $\C$-linear functional $\varphi\colon A\rightarrow \bC$ with $\varphi(a^*)=\overline{\varphi(a)}$, such that  $$\varphi(b)<0\quad  \mbox{ and }\quad  \varphi(a^*a)>0 \mbox{ for } a\in A\setminus\{0\}.$$
\end{theorem}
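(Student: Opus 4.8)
The plan is to reduce completely positive separation to ordinary positive separation on the scalar-extended algebra and to invoke Theorem \ref{separationtheorem} there. By Lemma \ref{fext}, a functional $\varphi\colon A\to\bC$ with $\varphi(a^*)=\overline{\varphi(a)}$ is completely positive precisely when its extension $\id\otimes\varphi\colon\bC\otimes_\C A\to\bC$ is positive on $\Sigma^2(\bC\otimes_\C A)$. So it suffices to separate $1\otimes b$ from the cone $\Sigma^2(\bC\otimes_\C A)$ inside the hermitian part $(\bC\otimes_\C A)^h$ by a positive functional that moreover has the special form $\id\otimes\varphi$. The point is that such functionals are exactly the $\bC$-linear ones: if a separating functional $\Psi$ on $(\bC\otimes_\C A)^h$ is $\bR$-linear, then its unique extension to $\bC\otimes_\C A$ with $\Psi(\xi^*)=\overline{\Psi(\xi)}$ is automatically $\bC$-linear (one checks $\Psi(i\xi)=i\Psi(\xi)$ and $\Psi(r\xi)=r\Psi(\xi)$ for $r\in\bR$ directly), hence equals $\id\otimes\varphi$ for $\varphi:=\Psi(1\otimes-)\colon A\to\bC$. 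To obtain an $\bR$-linear $\Psi$ I would apply the separation argument of Theorem \ref{separationtheorem} over the base field $\bR$ rather than over $\R$; its proof (flag construction plus infinitesimals in the finite-dimensional case, ultrapower in general) is insensitive to which real closed field plays the role of the ground field.

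First I would record the two algebraic inputs that make the separation applicable. Since $A$ is real reduced, one expects $\bC\otimes_\C A$ to be real reduced as well, so that $\Sigma^2(\bC\otimes_\C A)\cap-\Sigma^2(\bC\otimes_\C A)=\{0\}$ and Theorem \ref{separationtheorem} yields strict positivity $\Psi(y)>0$ on $\Sigma^2(\bC\otimes_\C A)\setminus\{0\}$; in particular $\varphi(a^*a)=\Psi(1\otimes a^*a)>0$ for every $a\neq0$, because $1\otimes a^*a$ is a nonzero hermitian square. One also needs $1\otimes b\notin\Sigma^2(\bC\otimes_\C A)$, so that $1\otimes b$ really lies outside the cone; this is the statement that forming sums of hermitian squares commutes with the scalar extension $\C\to\bC$ on $b$, and it should follow from $b\notin\Sigma^2 A$ together with real reducedness. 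Granting these, Theorem \ref{separationtheorem} produces $\Psi$ with $\Psi(1\otimes b)<0$ and $\Psi>0$ on the punctured cone, and the preceding paragraph converts $\Psi$ into the desired $\varphi$ with $\varphi(b)<0$ and $\varphi$ strictly positive.

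The genuine difficulty is a self-referential clash of fields. Theorem \ref{separationtheorem} only guarantees that $\Psi$ takes values in some real closed extension $\bR'\supsetneq\bR$, whereas the criterion of Lemma \ref{fext} for complete positivity of the resulting $\varphi\colon A\to\bR'[i]$ demands positivity of $\id\otimes\varphi$ tested against vectors from the same field $\bR'[i]$, not merely from $\bC=\bR[i]$. Positivity over $\bC$ does not imply positivity over the larger $\bR'[i]$ in general, since $\bR$ need not be Dedekind-complete and the bad direction in $(\bR'[i])^{\,m}$ may be invisible from $\bC^m$. I would resolve this by choosing the ground field with foresight: take $\bR$ to be a real closed field that is sufficiently saturated (of saturation degree exceeding $|A|$). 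Then, after separating inside $\bC\otimes_\C A$ and obtaining values in $\bR'$, saturation provides an $\bR$-embedding $\bR'\hookrightarrow\bR$ fixing $\bR$, along which one pushes $\Psi$ back to a functional valued in $\bC$ itself. Now the separating functional is $\bC$-valued, tensor field and target field coincide, and ``$\Psi\geq0$ on $\Sigma^2(\bC\otimes_\C A)$'' is literally the complete positivity of $\varphi$ via Lemma \ref{fext}. I expect this closing-the-field step -- together with justifying the base-change facts of the second paragraph -- to be the crux; the rest is bookkeeping built on Theorem \ref{separationtheorem} and Lemma \ref{fext}.
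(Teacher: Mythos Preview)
Your route is genuinely different from the paper's, and the difference is instructive. The paper does \emph{not} pass to $\bC\otimes_\C A$ at all. Instead it exploits the elementary fact recorded in Remark \ref{boring}: an ordinary $\C$-valued positive functional is automatically completely positive. For every finite-dimensional subspace $H\subset A$ the cone $\Sigma^2 H$ is closed and salient (here real reducedness is used), so classical separation over $\R$ produces a $\C$-valued functional $\varphi_H$ with $\varphi_H(b)<0$ and $\varphi_H(a^*a)>0$ for $a\in H\setminus\{0\}$. One then takes the ultraproduct $\varphi=(\varphi_H)_H$ into $\R^\omega$. Complete positivity survives because it is a first-order condition that can be tested componentwise: each matrix $\varphi^{(m)}((a_i^*a_j)_{i,j})$ is positive semidefinite in ${\rm M}_m(\R^\omega[i])$ since it is so in almost every component by Remark \ref{boring}. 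This completely sidesteps the field self-reference you wrestle with.

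Your proposed fix has a concrete gap. You assert that the proof of Theorem \ref{separationtheorem} is ``insensitive to which real closed field plays the role of the ground field''. This is false: the finite-dimensional step invokes the standard separation theorem over $\R$, which relies on completeness. Over a non-Archimedean $\bR$ it can fail outright. For instance, in $\bR^2$ with an infinitesimal $\epsilon$, take $C$ to be the convex cone generated by $\{(1,r):r\in\R\}$; then $C=\{0\}\cup\{(a,b):a>0,\ b/a\text{ bounded by some integer}\}$ and $x=(\epsilon,1)\notin C$, yet every linear functional nonnegative on $C$ has the form $(a,b)\mapsto\alpha a$ with $\alpha\ge 0$, hence is strictly positive at $x$ unless it vanishes identically. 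So no nontrivial weak separation exists, and the flag construction cannot even start. Your saturation device is therefore beside the point: the separation you need over $\bR$ is not available in the first place. The two ``algebraic inputs'' you flag (real reducedness of $\bC\otimes_\C A$ and $1\otimes b\notin\Sigma^2(\bC\otimes_\C A)$) are also left as expectations; the paper's argument never needs them. The moral is that the right place to use Theorem \ref{separationtheorem} is over $\R$, on the original algebra, with complete positivity obtained for free from the $\C$-valued finite-dimensional pieces via Remark \ref{boring} and \L o\'s.
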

\begin{proof} For any finite dimensional subspace $H$ of $A$ denote by $$\Sigma^2 H=\left\{\sum_i a_i^*a_i\mid a_i\in H\right\} $$ the set of sums of hermitian squares of elements from $H$. It is a well-known fact that $\Sigma^2 H$ is a closed convex cone in a finite dimensional subspace of $A^h$. This follows from the fact that $A$ is real reduced, using for example the approach from \cite{MR1823953}, especially Lemma 2.7 from that work. It also follows that $\Sigma^2 H$ is salient, i.e. fulfills $$\Sigma^2 H\cap -\Sigma^2 H=\{0\}.$$So for each such $H$ there is an $\R$-linear functional $\varphi_H\colon A^h\rightarrow \R$ with $$\varphi(b)<0 \quad \mbox{and}\quad \varphi_H(a^*a)>0 \mbox{ for all } a\in H\setminus\{0\}.$$

Let $\mathcal{S}$ be the set of all finite dimensional subspaces $H$ of $A$, equipped with an ultrafilter $\omega$ containing all the sets $\{H\in\mathcal{S}\mid c\in H\}.$ Define $$\varphi\colon A^h\rightarrow \R^\omega; \quad \varphi(a):=\left(\varphi_H(a)\right)_{H\in\mathcal{S}}.$$ Then $\varphi$ does the separation as desired. We consider the $\C$-linear extension of $\varphi$ to $A$, and finally show that it is completely positive. The $\C$-linear extension of $\varphi_H$ to $A$ indeed maps a matrix $(a_i^*a_j)_{i,j}\in {\rm M}_n(A)$ to a positive semidefinite hermitian matrix, at least if all $a_i\in H$, as is easily checked (compare to Remark \ref{boring}).   Since we can check positivity of the matrix  $\left(\varphi(a_i^*a_j)\right)_{i,j}\in{\rm M}_n(\R^\omega[i])$  componentwisely in ${\rm M}_n(\R[i])$, by the Theorem of \L os, this finishes the proof.  \end{proof}

Throughout, we will take the freedom to consider $*$-representations of $A$ on vector spaces which carry a sesqui-linear $\bC$-valued inner product, where $\bC = \bR[i]$ for some real closed field $\bR \supset \R$. We call these representations \emph{generalized representations}.
For every completely positive functional $\varphi \colon A \to \bC$ we can perform the usual GNS-construction to construct such a representation (see the proof of Theorem \ref{schm} below for more technical details). The usual concepts of self-adjointness and positive semi-definiteness of operators on such a vector space can be defined without any problems.  The first consequence is the following Positivstellensatz (compare to the standard Positivstellensatz from the introduction):

\begin{theorem}\label{pos}
Let $A$ be a real reduced unital $*$-algebra, and   $a \in A^h$. Then $a$ is positive semidefinite in every generalized representation if and only if $a \in \Sigma^2 A$.
\end{theorem}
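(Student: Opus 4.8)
The statement is an equivalence, so I would prove the two implications separately; the direction ``positive semidefinite everywhere $\Rightarrow$ sum of squares'' is the substantial one.

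For the easy implication, suppose $a\in\Sigma^2 A$, say $a=\sum_i b_i^* b_i$. In any generalized representation $\pi$ of $A$ on a space $V$ with $\bC$-valued inner product $\langle\cdot,\cdot\rangle$, the operator $\pi(a)=\sum_i \pi(b_i)^*\pi(b_i)$ is self-adjoint, and for every $\xi\in V$ one computes $\langle \pi(a)\xi,\xi\rangle=\sum_i\langle \pi(b_i)\xi,\pi(b_i)\xi\rangle\geq 0$, since each summand is a value of the positive semidefinite inner product and $\bR$ is an ordered field. Hence $\pi(a)$ is positive semidefinite, and this uses nothing beyond the axioms of a generalized representation.

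For the converse I would argue by contraposition: assuming $a\in A^h\setminus\Sigma^2 A$, I will exhibit a single generalized representation in which $\pi(a)$ fails to be positive semidefinite. Since $A$ is real reduced, Theorem \ref{cpsep} furnishes a real closed extension $\bR$ and a completely positive $\C$-linear functional $\varphi\colon A\to\bC$ with $\varphi(a^*)=\overline{\varphi(a)}$, $\varphi(a)<0$, and $\varphi(c^*c)>0$ for all $c\neq 0$. The plan is to run a Gelfand--Naimark--Segal construction over $\bC$. Passing to $B=\bC\otimes_\C A$, which is a $\bC$-algebra with involution, Lemma \ref{fext} shows that the extension $\id\otimes\varphi\colon B\to\bC$ is positive, so $\langle u,v\rangle:=(\id\otimes\varphi)(v^* u)$ defines a positive semidefinite $\bC$-valued sesquilinear form on $B$. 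The Cauchy--Schwarz inequality (the corollary to Lemma \ref{fext}) shows that the null set $N=\{u\in B:\langle u,u\rangle=0\}$ is a left ideal, so $V=B/N$ carries a positive definite $\bC$-valued inner product, left multiplication descends to a $*$-representation $\pi\colon A\to\mathrm{End}_\bC(V)$ with $\pi(x)[u]=[xu]$, and $\pi(x)^*=\pi(x^*)$ follows from $\varphi(c^*)=\overline{\varphi(c)}$. This $\pi$ is a generalized representation. The unit gives a vector $\xi=[1\otimes 1]$, nonzero because $\langle\xi,\xi\rangle=\varphi(1)>0$, and $\langle\pi(a)\xi,\xi\rangle=(\id\otimes\varphi)(1\otimes a)=\varphi(a)<0$, so the self-adjoint operator $\pi(a)$ is not positive semidefinite. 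This completes the contrapositive.

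The main obstacle I anticipate is justifying that the GNS machinery survives the passage from $\R$ and $\C$ to a possibly non-Archimedean real closed $\bR$ and $\bC=\bR[i]$. Plain positivity of $\varphi$ would be useless here, since over such fields positive functionals can violate Cauchy--Schwarz (Example \ref{notcp}), which would wreck both the well-definedness of the inner product on $V$ and the claim that $N$ is a left ideal. It is precisely \emph{complete} positivity -- equivalently, positivity of $\id\otimes\varphi$ on all of $B$ via Lemma \ref{fext} -- that restores the inequality and makes the quotient construction go through. Everything else reduces to the order and square-root structure of a real closed field, which behaves exactly as over $\R$.
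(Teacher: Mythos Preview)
Your proposal is correct and follows essentially the same route as the paper: the paper's own proof is only two sentences (contraposition via Theorem~\ref{cpsep}, then the generalized GNS-representation), and you have simply spelled out the GNS details that the paper defers to the paragraph preceding Theorem~\ref{pos} and to the proof of Theorem~\ref{schm}. Your closing remark on why \emph{complete} positivity---rather than mere positivity---is needed to rescue Cauchy--Schwarz and the left-ideal property of $N$ over a non-Archimedean $\bR$ is exactly the point the paper is making with Examples~\ref{notcp}--\ref{notcp2} and Lemma~\ref{fext}.
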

\begin{proof}
If $a \notin  \Sigma^2 A$, then there exists a completely positive map $\varphi \colon A \to \bC$, such that $\varphi(a)<0$. Clearly, $a$ will not be positive semidefinite in the generalized GNS-representation associated with $\varphi$.
\end{proof}

Examples for real reduced unital $*$-algebras are group algebras $\C[\Gamma]$.
In Section \ref{free}, we will see that for particular groups the study of generalized representations of $\C[\Gamma]$  can be reduced to the study of usual (finite-dimensional) unitary representations, using Tarski's transfer principle.

\section{Sums of Squares in the Group Algebra} \label{aug}

Let $\Gamma$ be a group and let $\C[\Gamma]$ denote the complex group algebra. A typical element in $\C[\Gamma]$ is denoted by $a = \sum_{g} a_g g,$ where only finitely many of the $a_g\in\C$ are not zero. In $\C [\Gamma]$ we identify $\C$ with $\C \cdot e$, where $e$ denotes the neutral element of $\Gamma$.
The group algebra comes equipped with an involution $(\sum_g a_g g)^* := \sum_{g} \bar a_g g^{-1}$ and a trace $\tau \colon \C[\Gamma] \to \C$, which is given by the formula $\tau(\sum_{g} a_g g) = a_e$.  The trace shows that $\C[\Gamma]$ is real reduced. Let $\Sigma^2\C[\Gamma]$ denote the set of sums of hermitian squares in $\C[\Gamma]$. The following appears for example as Example 3 in \cite{MR2494309}:

\begin{lemma} \label{bounded}
$\|a\|^2_1 - a^*a \in \Sigma^2 \C[\Gamma]$ for all $a \in \C[\Gamma]$, where $\Vert a\Vert_1=\sum_g \vert a_g\vert.$  
\end{lemma}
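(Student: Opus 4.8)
The plan is to exhibit an explicit sum-of-hermitian-squares decomposition by means of a polarization identity summed over pairs of group elements in the support of $a$. First I would write each nonzero coefficient in polar form, $a_g = |a_g|\, u_g$ with $|u_g| = 1$, so that $a = \sum_g |a_g|\, u_g\, g$, the sum running over the finite support of $a$. The guiding idea is that the ``diagonal'' part of a suitable square should reproduce $\|a\|_1^2\,e$, while the symmetrized ``off-diagonal'' part should reassemble $a^*a$.

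Concretely, the heart of the argument is the identity
$$\|a\|_1^2\, e - a^*a = \frac{1}{2}\sum_{g,h} |a_g|\,|a_h|\,(u_g g - u_h h)^*(u_g g - u_h h),$$
which I would verify by expanding the right-hand side. Using $(u_g g)^*(u_g g) = |u_g|^2\, g^{-1}g = e$, each summand becomes $|a_g|\,|a_h|\,\bigl(2e - \bar u_g u_h\, g^{-1}h - \bar u_h u_g\, h^{-1}g\bigr)$. Summing the diagonal term $2e$ gives $2\bigl(\sum_g |a_g|\bigr)\bigl(\sum_h |a_h|\bigr)\,e = 2\|a\|_1^2\, e$. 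For the two cross terms I would use $|a_g|\,\bar u_g = \overline{a_g}$ and $|a_h|\, u_h = a_h$ to recognize $\sum_{g,h} |a_g|\,|a_h|\,\bar u_g u_h\, g^{-1}h = \sum_{g,h}\overline{a_g}\, a_h\, g^{-1} h = a^*a$, and a relabelling $g \leftrightarrow h$ shows the remaining cross term equals $a^*a$ as well. Collecting the three contributions yields $2\|a\|_1^2\, e - 2\, a^* a$, and dividing by $2$ gives the displayed identity.

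Finally, since $|a_g|\,|a_h| \geq 0$, I would absorb the scalar into the generator by setting $c_{g,h} := \sqrt{|a_g|\,|a_h|/2}\,\,(u_g g - u_h h) \in \C[\Gamma]$, so that the right-hand side above is literally $\sum_{g,h} c_{g,h}^*\, c_{g,h}$, a finite sum of hermitian squares. This places $\|a\|_1^2\, e - a^*a$ in $\Sigma^2 \C[\Gamma]$, as claimed.

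I do not expect a genuine obstacle here: the computation is entirely routine once the right family of squares has been chosen. The only real content is the \emph{discovery} of that family, namely the weighted pairwise differences $u_g g - u_h h$, engineered precisely so that the diagonal contribution reproduces $\|a\|_1^2\, e$ and the symmetrized off-diagonal contribution reconstructs $a^*a$. Everything else is bookkeeping with the involution and a finite sum over the support.
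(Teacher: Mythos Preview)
Your argument is correct: the polarization-type identity
\[
\|a\|_1^2 - a^*a \;=\; \frac{1}{2}\sum_{g,h} |a_g|\,|a_h|\,(u_g g - u_h h)^*(u_g g - u_h h)
\]
checks out line by line, and the absorption of the nonnegative weights into the generators gives a genuine finite sum of hermitian squares. There is nothing to compare against, since the paper does not supply its own proof of this lemma but simply records it as a known fact with a reference to Cimpri\v{c}; your explicit decomposition is exactly the sort of direct verification one would expect, and it stands on its own.
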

\begin{remark}\label{rem:cim}
From the identity $$2\Vert a\Vert_1\cdot \left( \Vert a\Vert_1-a\right)=\left(\Vert a\Vert_1 -a\right)^*\left(\Vert a\Vert_1 -a\right) + (\Vert a\Vert_1^2 -a^*a)$$ for $a\in\C[\Gamma]^h$ we see  that $1$ is an algebraic interior point of the cone $\Sigma^2\C[\Gamma]$ in the real vector space $\C[\Gamma]^h.$  That means $1+\epsilon a\in\Sigma^2\C[\Gamma]$ for all $a\in\C[\Gamma]^h$ and sufficiently small $\epsilon >0$. In fact, the $\epsilon$ does only depend on $\Vert a\Vert_1$ here.
\end{remark}

\begin{remark}
As explained in the introduction, for any element $a\in\C[\Gamma]^h$ that is positive semidefinite in each (usual) $*$-representation of $\C[\Gamma]$ one thus has  $a+\epsilon \in\Sigma^2\C[\Gamma],$ for all $\epsilon >0$. 
\end{remark}

\begin{remark}
Since $\C[\Gamma]$ is real reduced and unital, the result of Theorem \ref{pos} holds here as well. So if $a$ is positive semidefinite in each generalized representation, then $a\in\Sigma^2 \C[\Gamma].$
\end{remark}

We now consider the augmentation homomorphism $\varepsilon \colon \C[\Gamma] \to \C$ which is defined by $\varepsilon(\sum_{g} a_g g) = \sum_{g} a_g$. 
The \emph{augmentation ideal} is $$\omega(\Gamma) := \ker(\varepsilon) =\left\{a \in \C[\Gamma] \mid \sum_g a_g = 0 \right\}.$$ We set $c(g):=g-1$ and note that $\{c(g) \mid g \in \Gamma \setminus \{e\} \}$ is a basis of $\omega(\Gamma)$. The multiplication satisfies
$$c(g)c(h) = c(gh) - c(g) - c(h).$$ We denote by $\omega^2(\Gamma)$  the square of $\omega(\Gamma),$ i.e. $\omega^2(\Gamma)= {\rm span}_{\C}\{ab \mid a,b \in \omega(\Gamma)\}$. 
Inside $\omega(\Gamma)$, we study the cone of sums of hermitian squares
$$\Sigma^2 \omega(\Gamma):= \left\{ \sum a_i^*a_i \mid a_i \in \omega(\Gamma)\right\}.$$
We are interested in interior points of this cone. Note that $\Sigma^2 \omega(\Gamma) \subset \omega^2(\Gamma)$ and
$\omega(\Gamma)/\omega^2(\Gamma) = \C \otimes_{\Z} \Gamma_{ab}$, where $\Gamma_{ab} = \Gamma/[\Gamma,\Gamma]$ and $[\Gamma,\Gamma]$ denotes the subgroup of $\Gamma$ which is generated by commutators.
Hence, if $\Gamma$ has non-torsion abelianization, then $\Sigma^2\omega(\Gamma)$ is contained in a proper subspace of $\omega(\Gamma)$.
We will however show below that $\Sigma^2\omega(\Gamma)$ always has an interior point in $\omega^2(\Gamma)^h$.

\begin{lemma} For any group, $\Sigma^2 \omega(\Gamma) = \Sigma^2 \C[\Gamma] \cap \omega(\Gamma)$
\end{lemma}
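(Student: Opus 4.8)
The plan is to prove the two inclusions separately, and the entire argument will rest on the single observation that the augmentation map $\varepsilon\colon\C[\Gamma]\to\C$ is a $\ast$-algebra homomorphism into $\C$. Indeed, $\varepsilon$ is multiplicative by construction, and since $\varepsilon\bigl((\sum_g a_g g)^*\bigr)=\sum_g\overline{a_g}=\overline{\varepsilon(\sum_g a_g g)}$, it intertwines the involutions (with complex conjugation on $\C$). Consequently, for any $a\in\C[\Gamma]$ one has the key identity
$$\varepsilon(a^*a)=\overline{\varepsilon(a)}\,\varepsilon(a)=|\varepsilon(a)|^2\geq 0.$$

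For the inclusion $\Sigma^2\omega(\Gamma)\subseteq\Sigma^2\C[\Gamma]\cap\omega(\Gamma)$, I would argue as follows. If $x=\sum_i a_i^*a_i$ with $a_i\in\omega(\Gamma)$, then trivially $x\in\Sigma^2\C[\Gamma]$ since each $a_i$ lies in $\C[\Gamma]$. To see that $x\in\omega(\Gamma)=\ker(\varepsilon)$, I apply $\varepsilon$ and use the identity above together with $\varepsilon(a_i)=0$ to obtain $\varepsilon(x)=\sum_i|\varepsilon(a_i)|^2=0$.

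For the reverse inclusion $\Sigma^2\C[\Gamma]\cap\omega(\Gamma)\subseteq\Sigma^2\omega(\Gamma)$, which is the substantive direction, I would take $x=\sum_i a_i^*a_i$ with $a_i\in\C[\Gamma]$ and $\varepsilon(x)=0$. Applying $\varepsilon$ gives
$$0=\varepsilon(x)=\sum_i|\varepsilon(a_i)|^2.$$
Since this is a sum of nonnegative real numbers that vanishes, every summand must vanish, forcing $\varepsilon(a_i)=0$, i.e.\ $a_i\in\omega(\Gamma)$, for each $i$. Hence the given representation of $x$ is already a sum of hermitian squares of elements of $\omega(\Gamma)$, so $x\in\Sigma^2\omega(\Gamma)$.

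The argument is short, and there is in fact no serious obstacle: the only point requiring care is the positivity step, namely that $\varepsilon(a_i^*a_i)=|\varepsilon(a_i)|^2$ is a nonnegative \emph{real} number, so that a vanishing sum of these terms forces each $\varepsilon(a_i)$ to be zero. The crucial structural fact being exploited is that $\omega(\Gamma)$ is not merely a subspace but the kernel of a $\ast$-homomorphism, which is what allows one to conclude that the individual summands of any sum-of-squares representation of an element of $\omega(\Gamma)$ already lie in $\omega(\Gamma)$.
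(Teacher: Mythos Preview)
Your proof is correct and follows essentially the same approach as the paper: the paper declares the inclusion $\Sigma^2\omega(\Gamma)\subset\Sigma^2\C[\Gamma]\cap\omega(\Gamma)$ obvious and then, for the converse, applies $\varepsilon$ to obtain $\sum_i|\varepsilon(a_i)|^2=0$ and hence $\varepsilon(a_i)=0$ for all $i$. Your version simply makes explicit the underlying fact that $\varepsilon$ is a $\ast$-homomorphism, which the paper leaves implicit.
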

\begin{proof} The inclusion $\Sigma^2 \omega(\Gamma) \subset \Sigma^2 \C[\Gamma] \cap \omega(\Gamma)$ is obvious. If $\sum_i a_i^*a_i \in \omega(\Gamma)$ with $a_i \in \C[\Gamma]$, then $\sum_i |\varepsilon(a_i)|^2 =0$ and hence $\varepsilon(a_i)=0$ for all $i$. This proves the converse inclusion.
\end{proof}

\begin{remark} It turns out that  we can always extend positive functionals $\varphi$ on $\omega(\Gamma)$ to positive functionals $\overline\varphi$ on $\C[\Gamma]$, at least if we allow for an extension of the real closed field.
Indeed observe  $$\left(\Sigma^2\C[\Gamma] + \omega(\Gamma)^h\right)\cap -\left(\Sigma^2\C[\Gamma] + \omega(\Gamma)^h\right)=\omega(\Gamma)^h,$$ which follows immediately from an application of the augmentation homomorphism $\varepsilon.$ We can thus apply Theorem \ref{extend}. \end{remark}

\begin{lemma}\label{lembound} Let $\bR$ be a real closed extension field of $\R$, and $\varphi \colon \omega(\Gamma) \to \bC$ a completely positive $\C$-linear functional with $\varphi(a^*)=\overline{\varphi(a)}$ for all $a\in\omega(\Gamma)$. Then for all $s,h\in\Gamma$
$$|\varphi(c(s)^*c(h))| \leq \frac{1}{\sqrt{2}} \cdot \left( \varphi(c(s)^*c(s)) + \varphi(c(h)^*c(h)) \right).$$
$$\varphi(c(sh)^* c(sh)) \leq  2\cdot  \left( \varphi(c(s)^* c(s)) + \varphi(c(h)^* c(h)) \right).$$
\end{lemma}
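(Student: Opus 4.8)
The plan is to treat both inequalities as consequences of the gauge $\|a\|_\varphi := \varphi(a^*a)^{1/2}$ introduced in Corollary \ref{gauge}, which is a seminorm on $\omega(\Gamma)$ with values in the real closed field $\bR$ (where square roots of nonnegative elements exist, so the gauge is well defined since $\varphi(a^*a)\ge 0$). For the first inequality I would first invoke the Cauchy-Schwarz inequality, which holds because $\varphi$ is completely positive, to get $|\varphi(c(s)^*c(h))| \le \|c(s)\|_\varphi \, \|c(h)\|_\varphi$. Then the elementary estimate $xy \le \tfrac12(x^2+y^2)$ — valid in any ordered field, hence in $\bR$, since it merely rephrases $(x-y)^2 \ge 0$ — yields $\|c(s)\|_\varphi \|c(h)\|_\varphi \le \tfrac12\bigl(\varphi(c(s)^*c(s)) + \varphi(c(h)^*c(h))\bigr)$. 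This already gives the constant $\tfrac12$, which is stronger than the claimed $\tfrac{1}{\sqrt{2}}$, so the first inequality follows a fortiori.

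The heart of the argument is the second inequality, and the key is choosing the right algebraic decomposition of $c(sh)$. Rather than using $c(s)c(h) = c(sh)-c(s)-c(h)$, which introduces the product term $c(s)c(h)$ that is hard to control, I would use the two-term identity
$$c(sh) = s\cdot c(h) + c(s),$$
which is immediate from $s(h-1) + (s-1) = sh - 1$. Both summands lie in $\omega(\Gamma)$, since applying $\varepsilon$ gives $\varepsilon(s\cdot c(h)) = \varepsilon(s)\varepsilon(c(h)) = 0$ and $\varepsilon(c(s)) = 0$. The point of this decomposition is that left multiplication by the group element $s$ (a unitary, $s^*s = e$) preserves the gauge: because $(s\cdot c(h))^*(s\cdot c(h)) = c(h)^* s^* s \, c(h) = c(h)^*c(h)$, we obtain $\|s\cdot c(h)\|_\varphi = \|c(h)\|_\varphi$. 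Note that $c(h)^*c(h)$ lies in $\omega^2(\Gamma) \subseteq \omega(\Gamma)$, so $\varphi$ is indeed defined on it.

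With the decomposition in hand, I would apply the triangle inequality from Corollary \ref{gauge} to the two summands:
$$\varphi(c(sh)^*c(sh))^{1/2} = \|s\cdot c(h) + c(s)\|_\varphi \le \|s\cdot c(h)\|_\varphi + \|c(s)\|_\varphi = \|c(h)\|_\varphi + \|c(s)\|_\varphi.$$
Squaring and applying $2xy \le x^2 + y^2$ once more bounds the right-hand side by $2\bigl(\|c(s)\|_\varphi^2 + \|c(h)\|_\varphi^2\bigr) = 2\bigl(\varphi(c(s)^*c(s)) + \varphi(c(h)^*c(h))\bigr)$, which is exactly the claimed estimate. The only genuinely nontrivial step is spotting the unitary decomposition $c(sh) = s\cdot c(h) + c(s)$ and recognizing the resulting gauge invariance; once that is set up, everything reduces to the triangle inequality of Corollary \ref{gauge} together with elementary ordered-field arithmetic. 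I expect no real obstacle beyond verifying that all elements remain inside the non-unital algebra $\omega(\Gamma)$ on which $\varphi$ is defined, which the augmentation computations above confirm.
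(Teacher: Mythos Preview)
Your proof is correct and, despite appearances, follows the same route as the paper. For the second inequality the paper decomposes $c(sh) = c(s) + \bigl(c(s)c(h)+c(h)\bigr)$ and then invokes the ``easy to verify identity'' $\|c(s)c(h)+c(h)\|_\varphi^2 = \|c(h)\|_\varphi^2$; but $c(s)c(h)+c(h) = (s-1)(h-1)+(h-1) = s\cdot c(h)$, so this is precisely your decomposition $c(sh) = s\cdot c(h) + c(s)$, and the ``identity'' is exactly your unitary gauge invariance $\|s\cdot c(h)\|_\varphi = \|c(h)\|_\varphi$. Your notation simply makes the mechanism more transparent. For the first inequality the paper applies $\lambda\mu \le \tfrac{(\lambda+\mu)^2}{2}$ with $\lambda = \varphi(c(s)^*c(s))$, $\mu = \varphi(c(h)^*c(h))$ after squaring Cauchy--Schwarz, which yields the stated constant $\tfrac{1}{\sqrt 2}$; your variant (applying $xy \le \tfrac12(x^2+y^2)$ to the gauges themselves) gives the sharper constant $\tfrac12$, which is a harmless improvement.
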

\begin{proof}
The first inequality is an application of the Cauchy-Schwarz inequality (that is fulfilled by completely positive functionals) and the inequality $\lambda\mu \leq \frac{(\lambda+\mu)^2}{2}$.  For the second inequality first apply the triangle identity from  Corollary \ref{gauge} to the equation $$c(sh) =c(s) + \left(c(s)c(h)+c(h)\right), $$ together with the well-known inequality $(a + b)^2 \leq 2(a^2 + b^2)$.  Then use the easy to verify identity $\Vert c(s)c(h)+c(h)\Vert^2=\Vert c(h)\Vert^2.$
\end{proof}

 Since $1\notin\omega(\Gamma)$, we need to find a different candidate for an interior point of $\Sigma^2\omega(\Gamma).$
\begin{definition}
Let $S \subset \Gamma$ be a finite symmetric set, i.e. $S^{-1}=S$. We define the Laplace operator on $S$ to be
$$\Delta(S) := |S| - \sum_{s \in S} s$$
\end{definition}

\begin{remark}\label{remlap}
Note that for every finite symmetric set $S \subset \Gamma$
$$\Delta(S) = \frac{1}2 \cdot \sum_{s \in S} c(s)^*c(s) \in \Sigma^2 \omega(\Gamma).$$
\end{remark}

\begin{proposition} \label{boundedbylap}
Let $\Gamma$ be a group, generated by a finite symmetric set $S$.  Then for any $b\in\omega^2(\Gamma)$ there exists a constant $C(b)\in\R$, such that for any real closed extension field $\bR$  of $\R$ and any completely positive $\C$-linear functional 
$\varphi \colon \omega(\Gamma) \to \bC$ with $\varphi(a^*)=\overline{\varphi(a)}$ for all $a\in\omega(\Gamma$), one has  $$|\varphi(b)| \leq C(b) \cdot \varphi(\Delta(S)).$$
\end{proposition}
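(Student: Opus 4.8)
The plan is to reduce the bound on an arbitrary $b \in \omega^2(\Gamma)$ to a uniform bound on the basic quantities $\varphi(c(s)^*c(h))$ for $s,h \in \Gamma$, and then to control these in terms of $\varphi(\Delta(S))$ using the two inequalities from Lemma \ref{lembound}. First I would observe that $\omega^2(\Gamma)$ is spanned by products $c(g)c(h)$, and by the multiplication rule $c(g)c(h) = c(gh) - c(g) - c(h)$ together with the identity $c(g^{-1}) = -g^{-1}c(g)$, every element of $\omega^2(\Gamma)$ can be written as a finite $\C$-linear combination of terms of the form $c(s)^*c(h)$ with $s, h \in \Gamma$ (after absorbing group elements on the left into the involution). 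Since $\varphi$ is linear, it therefore suffices to bound $|\varphi(c(s)^*c(h))|$ for each individual pair $(s,h)$ appearing in the fixed expansion of $b$; the constant $C(b)$ will then be an explicit sum of the moduli of the coefficients, times whatever uniform constant we obtain for the basic terms.

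The core of the argument is thus to show that for \emph{every} $g \in \Gamma$ there is a constant $K(g) \in \R$, independent of $\bR$ and $\varphi$, with $\varphi(c(g)^*c(g)) \leq K(g)\cdot \varphi(\Delta(S))$. By Remark \ref{remlap} we have $\varphi(c(s)^*c(s)) \leq 2\varphi(\Delta(S))$ for each generator $s \in S$, so the claim holds with $K(s) = 2$ on the generating set. For a general $g$, write $g = s_1 s_2 \cdots s_\ell$ as a word in $S$ and induct on the word length $\ell$ using the second inequality of Lemma \ref{lembound}: if $g = s_1 g'$ with $g'$ of length $\ell - 1$, then
$$\varphi(c(g)^*c(g)) \leq 2\left(\varphi(c(s_1)^*c(s_1)) + \varphi(c(g')^*c(g'))\right) \leq 2\left(2 + K(g')\right)\varphi(\Delta(S)),$$
so $K(g) := 2(2 + K(g'))$ works and the induction closes. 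This gives a uniform bound on each diagonal term, and then the first inequality of Lemma \ref{lembound} (or directly Cauchy-Schwarz) yields $|\varphi(c(s)^*c(h))| \leq \tfrac{1}{\sqrt 2}(K(s) + K(h))\varphi(\Delta(S))$ for the off-diagonal terms.

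Assembling these, for $b = \sum_k \lambda_k\, c(s_k)^*c(h_k)$ in the fixed expansion we set $C(b) := \sum_k |\lambda_k| \cdot \tfrac{1}{\sqrt 2}(K(s_k) + K(h_k))$ and obtain $|\varphi(b)| \leq C(b)\cdot\varphi(\Delta(S))$ by the triangle inequality, completing the proof. The main point to get right — and the step I expect to require the most care — is the reduction in the first paragraph: one must verify that every element of $\omega^2(\Gamma)$ really does lie in the $\C$-span of the terms $c(s)^*c(h)$, i.e. that the relation $c(g)c(h) = c(gh) - c(g) - c(h)$ and the involution $(c(g))^* = c(g^{-1}) = -g^{-1}c(g)$ suffice to bring any product into this normal form, and that the resulting constant $C(b)$ genuinely depends only on $b$ and $S$ and not on $\bR$ or $\varphi$. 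The uniformity is automatic here because each $K(g)$ is built only from the word length of $g$ in $S$, which is a property of $\Gamma$ and $S$ alone.
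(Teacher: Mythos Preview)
Your argument is correct and follows essentially the same route as the paper: write $b$ as a linear combination of terms $c(g)^*c(h)$, bound each such term via Lemma~\ref{lembound}, and then bound the generator terms $\varphi(c(s)^*c(s))$ by $2\varphi(\Delta(S))$ via Remark~\ref{remlap}; the paper's ``using Lemma~\ref{lembound} several times'' is exactly the word-length induction you spell out. One minor remark: the reduction you flag as delicate is immediate from $(c(g))^* = c(g^{-1})$, so $c(g)c(h) = c(g^{-1})^*c(h)$ already exhibits every spanning element of $\omega^2(\Gamma)$ in the desired form, without invoking the multiplication rule or ``absorbing group elements into the involution''.
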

\begin{proof} Every element  $b\in \omega^2(\Gamma)$ is a finite linear combination of $c(g)^*c(h)$, for $g,h \in \Gamma \setminus \{e\}$. This implies  that $\vert\varphi(b)\vert$ is bounded  by a universal constant times $\max \{ \varphi(c(s)^*c(s)) \mid s \in S \},$ using Lemma \ref{lembound} several times. However, $$\max \{ \varphi(c(s)^*c(s)) \mid s \in S \} \leq 2\cdot \varphi(\Delta(S))$$ follows from Remark \ref{remlap}. This proves the claim.
\end{proof}

\begin{theorem}\label{inner} Let $\Gamma$ be a group with finite generating symmetric set $S$. Then for every $b\in\omega^2(\Gamma)^h$ there is a constant $C(b)\in\R$ such that $$C(b)\cdot \Delta(S) \pm b\in\Sigma^2\omega(\Gamma).$$
In particular, $\Delta(S)$ is an inner point of the cone $\Sigma^2 \omega(\Gamma)$ in $\omega^2(\Gamma)^h$. If $H_1(\Gamma,\C)=0,$ it is an inner point in $\omega(\Gamma)^h$.
\end{theorem}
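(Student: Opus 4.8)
The plan is to deduce this theorem directly from the separation result of Theorem \ref{cpsep} combined with the boundedness estimate of Proposition \ref{boundedbylap}. The key observation is that the statement ``$C(b)\cdot\Delta(S)\pm b\in\Sigma^2\omega(\Gamma)$'' is precisely the defining property of $\Delta(S)$ being an algebraic interior point of the cone $\Sigma^2\omega(\Gamma)$ inside $\omega^2(\Gamma)^h$: for every direction $b$ in that ambient space, a sufficiently large multiple of $\Delta(S)$ dominates $\pm b$. So the whole theorem reduces to establishing this domination, and the ``in particular'' clause is then a matter of identifying the correct ambient space.

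First I would prove the domination by contradiction. Fix $b\in\omega^2(\Gamma)^h$, and suppose that for every constant $C\in\R$ one of the elements $C\cdot\Delta(S)-b$ or $C\cdot\Delta(S)+b$ fails to lie in $\Sigma^2\omega(\Gamma)$. Since $\omega(\Gamma)$ is real reduced (it inherits the trace from $\C[\Gamma]$), I may apply Theorem \ref{cpsep} to obtain, for the offending element, a completely positive $\C$-linear functional $\varphi$ into some $\bC$ with $\varphi$ negative on that element and strictly positive on nonzero hermitian squares. Concretely, if say $C\cdot\Delta(S)-b\notin\Sigma^2\omega(\Gamma)$, then $\varphi(C\cdot\Delta(S)-b)<0$, i.e. $C\cdot\varphi(\Delta(S))<\varphi(b)$, with $\varphi(\Delta(S))>0$ since $\Delta(S)$ is a nonzero sum of hermitian squares by Remark \ref{remlap}. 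This contradicts the uniform bound $|\varphi(b)|\le C(b)\cdot\varphi(\Delta(S))$ from Proposition \ref{boundedbylap} as soon as $C>C(b)$. The symmetric case $C\cdot\Delta(S)+b$ is handled identically using $\varphi(b)>-C(b)\cdot\varphi(\Delta(S))$. Hence taking $C=C(b)$ (or any slightly larger constant) both elements lie in the cone, which is the claimed inequality.

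For the final sentence, I would note that $\Delta(S)$ being an interior point of $\Sigma^2\omega(\Gamma)$ in the subspace $\omega^2(\Gamma)^h$ is now immediate: every $b\in\omega^2(\Gamma)^h$ can be reached from $\Delta(S)$ in both directions while staying in the cone. For the strengthening when $H_1(\Gamma,\C)=0$, the point is that $H_1(\Gamma,\C)=\omega(\Gamma)/\omega^2(\Gamma)\cong\C\otimes_\Z\Gamma_{ab}$ (as recorded in the discussion preceding the definition of $\Delta(S)$), so the vanishing of the first cohomology with complex coefficients forces $\omega(\Gamma)=\omega^2(\Gamma)$. Under this hypothesis the two ambient spaces coincide, and the interior point statement in $\omega^2(\Gamma)^h$ is already the interior point statement in $\omega(\Gamma)^h$.

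The main obstacle is really packaged into Proposition \ref{boundedbylap}, whose uniform constant $C(b)$ is what makes the argument work: without a bound that is \emph{independent} of $\varphi$ and of the real closed field $\bC$, the contradiction step would collapse, since one could not compare the single negative value against a fixed multiple of $\varphi(\Delta(S))$. Granting that proposition, the remaining subtlety is merely bookkeeping: ensuring that $\varphi(\Delta(S))$ is genuinely strictly positive (guaranteed by the strict positivity clause of Theorem \ref{cpsep} together with $\Delta(S)\neq 0$), and checking that the inequality $|\varphi(b)|\le C(b)\cdot\varphi(\Delta(S))$ covers both signs simultaneously so that a single constant $C(b)$ works for $+b$ and $-b$ at once.
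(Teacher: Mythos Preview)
Your proposal is correct and follows essentially the same route as the paper: combine the uniform estimate of Proposition~\ref{boundedbylap} with the completely positive separation of Theorem~\ref{cpsep} (using that $\omega(\Gamma)$ is real reduced), and then identify $H_1(\Gamma,\C)$ with $\omega(\Gamma)/\omega^2(\Gamma)$ for the last clause. The only difference is cosmetic: the paper argues directly via the contrapositive of Theorem~\ref{cpsep} with $C=C(b)$, whereas you phrase the same step as a contradiction quantified over all $C$ before specializing to $C>C(b)$.
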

\begin{proof} In view of Proposition \ref{boundedbylap} we find that $$C(b)\cdot \Delta(S)\pm  b$$ is nonnegative under each completely  positive real-closed valued $\C$-linear functional $\varphi$ on $\omega(\Gamma)$. In view of Theorem \ref{cpsep} this means that $C(b)\cdot\Delta(S)\pm b$ is a sum of hermitian squares in $\omega(\Gamma)$. Note that we use that $\omega(\Gamma)$ is real reduced. Finally note that $H_1(\Gamma,\C)=0$ just means that $\omega^2(\Gamma)=\omega(\Gamma).$
\end{proof}

\section{Groups with Kazhdan's Property (T)}\label{sec:kaz}

We want to show that the constant $C(b)$ from Theorem \ref{inner} can be chosen as a fixed multiple of $\Vert b\Vert_1$, in case the group $\Gamma$ has Kazhdan's property.

\begin{definition} Let $\Gamma$ be a group and $\pi \colon \Gamma \to U(H)$ a unitary representation on a Hilbert space $H$.
\begin{enumerate}
\item A 1-cocycle with respect to the unitary representation $\pi$ is a map $\delta \colon \Gamma \to H$, such that for all $g,h \in \Gamma$, we have $\delta(gh) = \pi(g) \delta(h) + \delta(g)$.
\item A 1-cocycle $\delta \colon \Gamma \to H$ is called inner, if $\delta(g) = \pi(g) \xi - \xi$ for some vector $\xi \in H$.
 \end{enumerate}
\end{definition}

\begin{definition} A group has Kazhdan's property (T) if every 1-cocycle with respect to every unitary representation is inner.
\end{definition}
We will use several results on Kazhdan groups, that can for example be found in \cite{MR2415834}. It is well-known that groups with Kazhdan's property (T) admit a finite generating set $S$, and that
$$\omega^2(\Gamma) = \omega(\Gamma)$$ holds. 
 It is also known that for a fixed finite symmetric and generating set $S$ in a Kazhdan group $\Gamma$ there is some $\epsilon >0$ such that for any unitary representation $\pi\colon\Gamma\rightarrow U(H)$ without fixed vectors one has $$\langle\Delta(S)\xi,\xi\rangle \geq \epsilon\cdot\Vert\xi\Vert^2 \quad \mbox{Êfor all }\xi \in H.$$ Such $\epsilon$ is called a {\it Kazhdan constant} for $S$.

Let's revisit the standard GNS-representation in the context of $\omega(\Gamma)$. Let $\varphi \colon \omega(\Gamma) \to \C$ be a positive linear functional with $\varphi(a^*)=\overline{\varphi(a)}$. We associate to $\varphi$ a Hilbert space as follows. We define on $\omega(\Gamma)$ a positive semi-definite sesqui-linear form
$$\langle a,b \rangle_{\varphi} := \varphi(b^*a)$$
and set $\|a\|_{\varphi} := \langle a,a \rangle^{1/2}_{\varphi}$.
Let $N(\varphi):= \{a \in \omega(\Gamma) \mid \|a\|_{\varphi} =0 \}$ and define $L^2(\omega(\Gamma),\varphi)$ to be the metric completion of $\omega(\Gamma)/N(\varphi)$ with respect to $\|.\|_{\varphi}$. We denote the image of $c(g)$ in $L^2(\omega(\Gamma),\varphi)$ by $\delta(g)$ and denote by $\delta(\Gamma)$ their complex linear span, which is dense by definition of $L^2(\omega(\Gamma),\varphi))$. It is a standard fact the the left-multiplication of $\omega(\Gamma)$ on itself extends to a homomorphism $\pi^{\varphi} \colon \omega(\Gamma) \to D(\delta(\Gamma))$, where $D(\delta(\Gamma))$ denotes the algebra of densely defined linear operators mapping $\delta(\Gamma)$ into itself. Indeed, if $a \in N(\varphi)$ and $b \in \omega(\Gamma)$, then $ba \in N(\varphi)$ since 
$$\varphi((ba)^*ba) = \varphi(a^*b^*ba) \leq \|b\|^2_1 \cdot \varphi(a^*a)$$
by Lemma \ref{bounded}. Note that
$$\pi^{\varphi}(c(g)) \delta(h) = \delta(gh) - \delta(g) - \delta(h).$$

Now we define a unitary representation $\pi_\varphi$  of $\Gamma$ on $L^2(\omega(\Gamma),\varphi)$ by the rule $$\pi_{\varphi}(g) := \pi^{\varphi}(c(g)) + 1_{L^2(\omega(\Gamma),\varphi)}.$$

\begin{lemma} \label{nofixed} If $\omega^2(\Gamma)= \omega(\Gamma)$, then the representation $\pi_{\varphi}$ has no fixed vectors.
\end{lemma}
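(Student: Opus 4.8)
The goal is to show that the unitary representation $\pi_\varphi$ on $L^2(\omega(\Gamma),\varphi)$ has no nonzero fixed vectors, under the assumption $\omega^2(\Gamma)=\omega(\Gamma)$. The plan is to argue that a fixed vector would force $\varphi$ to vanish on a spanning set of $\omega(\Gamma)$, hence be identically zero, which contradicts having a genuine representation (or simply shows the fixed subspace is trivial). The key computational identity is the one already recorded just before the statement, namely
$$\pi_\varphi(g)\delta(h) = \pi^\varphi(c(g))\delta(h) + \delta(h) = \delta(gh) - \delta(g),$$
obtained by adding $\delta(h)$ to the displayed formula for $\pi^\varphi(c(g))\delta(h)$. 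This is the multiplicative analogue of the cocycle relation and is what makes the $\delta(g)$ behave like an affine orbit.

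First I would suppose $\xi \in L^2(\omega(\Gamma),\varphi)$ is a fixed vector, so $\pi_\varphi(g)\xi = \xi$ for all $g\in\Gamma$. Since $\delta(\Gamma)$ is dense and the inner product is continuous, it suffices to test $\xi$ against the dense set of vectors $\delta(h)$. Using unitarity of $\pi_\varphi(g)$ together with the identity above, I would compute for any $g,h$:
$$\langle \xi, \delta(h)\rangle_\varphi = \langle \pi_\varphi(g)\xi, \pi_\varphi(g)\delta(h)\rangle_\varphi = \langle \xi, \delta(gh)-\delta(g)\rangle_\varphi.$$
Rearranging gives $\langle \xi, \delta(gh)\rangle_\varphi = \langle \xi, \delta(g)\rangle_\varphi + \langle \xi, \delta(h)\rangle_\varphi$ for all $g,h\in\Gamma$. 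In other words, the map $b\colon \Gamma \to \C$, $b(g) := \langle \xi, \delta(g)\rangle_\varphi$, is a group homomorphism into the additive group $(\C,+)$, i.e. a $1$-cocycle for the trivial representation, equivalently a class in $H_1(\Gamma,\C)$.

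The decisive point is now where the hypothesis $\omega^2(\Gamma)=\omega(\Gamma)$ enters. Because $b$ is additive, it annihilates all products: for $a,a'\in\omega(\Gamma)$ the multiplication rule $c(g)c(h)=c(gh)-c(g)-c(h)$ shows that the $\C$-linear extension of $b$ to $\omega(\Gamma)$ vanishes on $\omega^2(\Gamma)$. Since by assumption $\omega^2(\Gamma)=\omega(\Gamma)$, the functional $b$ vanishes identically, so $\langle \xi,\delta(g)\rangle_\varphi = 0$ for every $g$. As the $\delta(g)$ span a dense subspace, $\xi$ is orthogonal to everything and hence $\xi=0$. I expect the main obstacle to be bookkeeping rather than depth: one must check that the additive functional extends $\C$-linearly and that vanishing on the spanning products $c(g)^*c(h)$ (the hermitian form involves conjugates) really forces vanishing on all of $\omega^2(\Gamma)$; one should be careful that it is $\langle \xi,\delta(g)\rangle_\varphi$ and not $\langle\delta(g),\xi\rangle_\varphi$ that is additive, and track the conjugation in $\langle a,b\rangle_\varphi=\varphi(b^*a)$ so the linearity is in the correct slot.
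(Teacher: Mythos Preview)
Your argument is correct and follows essentially the same route as the paper: both show that a fixed vector is orthogonal to every $\delta(gh)-\delta(g)-\delta(h)$, hence to the image of $\omega^2(\Gamma)=\omega(\Gamma)$, and therefore vanishes. The only cosmetic difference is that the paper reaches this orthogonality directly from $\pi^{\varphi}(c(g))\eta=0$ via the adjoint, whereas you pass through unitarity of $\pi_\varphi(g)$ and repackage the same identity as additivity of the map $g\mapsto\langle\xi,\delta(g)\rangle_\varphi$, adding the (pleasant but not needed) interpretation as an element of $H_1(\Gamma,\C)$.
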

\begin{proof}
Assume $\eta \in L^2(\omega(\Gamma),\varphi)$ is a fixed vector.  By definition of $\pi_{\varphi}$, this means $\pi^{\varphi}(c(g)) \eta =0$, for all $g \in \Gamma$. Hence, 
$$0 = \langle \pi^{\varphi}(c(g^{-1})) \eta, \delta(h) \rangle_\varphi = \langle \eta, \delta(gh) - \delta(g) - \delta(h) \rangle_\varphi.$$ 
Since $c(g)c(h) = c(gh) - c(g) - c(h)$, the vectors $\delta(gh) - \delta(g) - \delta(h)$ span the image of $\omega^2(\Gamma)$ in $\delta(\Gamma)$ and hence $\delta(\Gamma)$ since $\omega^2(\Gamma)= \omega(\Gamma)$. \end{proof}
 
Note that the map $g \mapsto \delta(g)$ satisfies
$$\delta(gh) = \pi_{\varphi}(g) \delta(h) + \delta(g)$$
and hence defines a 1-cocycle with respect to the representation $\pi_{\varphi}$.
If $\Gamma$ is a Kazhdan group, then there exists $\Omega \in L^2(\omega(\Gamma),\varphi)$ such that
$$\delta(g) = \pi_{\varphi}(g) \Omega - \Omega.$$ 

\begin{proposition}\label{kazbound}
Let $\Gamma$ be a Kazhdan group with finite symmetric generating set $S$ and Kazhdan constant $\epsilon >0.$ Then for every $b\in\omega(\Gamma)^h,$ every real closed extension field $\bR$ of $\R$ and every  positive nontrivial  $\C$-linear functional $\varphi\colon\omega(\Gamma)\rightarrow\bC$ with $\varphi(a^*)=\overline{\varphi(a)}$ one  as $$\epsilon\cdot \varphi(b) < 2\Vert b\Vert_1 \cdot\varphi(\Delta(S)).$$
\end{proposition}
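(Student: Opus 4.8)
The plan is to run the GNS machinery recalled just above the statement and to feed the resulting $1$-cocycle into property (T). I would first prove the estimate for an ordinary complex-valued positive functional $\varphi\colon\omega(\Gamma)\to\C$, where positivity already forces complete positivity (Remark \ref{boring}) and Cauchy--Schwarz is available, and only afterwards confront the real-closed-valued case actually demanded.

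So let $\varphi\colon\omega(\Gamma)\to\C$ be positive and nontrivial, with GNS data $L^2(\omega(\Gamma),\varphi)$, $\pi_\varphi$ and $\delta$. Because $\omega^2(\Gamma)=\omega(\Gamma)$ for Kazhdan groups, Lemma \ref{nofixed} shows that $\pi_\varphi$ has no invariant vectors, so property (T) furnishes $\Omega$ with $\delta(g)=\pi_\varphi(g)\Omega-\Omega$. The first computation uses Remark \ref{remlap}, the relation $\delta(s)=\pi_\varphi(s)\Omega-\Omega$ and the symmetry of $S$ to rewrite
$$\varphi(\Delta(S))=\tfrac12\sum_{s\in S}\|\delta(s)\|_\varphi^2=\langle\pi_\varphi(\Delta(S))\Omega,\Omega\rangle\geq\epsilon\cdot\|\Omega\|_\varphi^2,$$
the last inequality being the defining property of the Kazhdan constant, legitimate precisely because there are no invariant vectors. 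The second ingredient is the identity $\varphi(c(g))=\langle\delta(g),\Omega\rangle$, and this is the only place where $H_1(\Gamma,\C)=0$ enters: expanding $\langle\delta(g),\delta(h)\rangle_\varphi$ once directly as $\varphi(c(h)^*c(g))$ and once through $\Omega$ shows that $g\mapsto\varphi(c(g))-\langle\delta(g),\Omega\rangle$ is an additive homomorphism $\Gamma\to\C$, which must vanish because $\Gamma_{ab}$ is finite.

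Granting these, and extending $\pi_\varphi$ to $\C[\Gamma]$, the augmentation $\sum_g b_g=0$ gives $\sum_g b_g\delta(g)=\pi_\varphi(b)\Omega$, whence $\varphi(b)=\sum_g b_g\varphi(c(g))=\langle\pi_\varphi(b)\Omega,\Omega\rangle$. Since $\pi_\varphi(b)$ is a combination of unitaries, $\|\pi_\varphi(b)\|\leq\|b\|_1$ — equivalently one invokes $\|b\|_1^2-b^*b\in\Sigma^2\C[\Gamma]$ from Lemma \ref{bounded} — so Cauchy--Schwarz yields
$$|\varphi(b)|\leq\|\pi_\varphi(b)\|\cdot\|\Omega\|_\varphi^2\leq\|b\|_1\cdot\|\Omega\|_\varphi^2\leq\frac{\|b\|_1}{\epsilon}\cdot\varphi(\Delta(S)).$$
Together with $\varphi(b)\le|\varphi(b)|$ this already gives $\epsilon\,\varphi(b)\le\|b\|_1\,\varphi(\Delta(S))$. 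For nontrivial $\varphi$ one has $\varphi(\Delta(S))>0$ (otherwise every $\|\delta(s)\|_\varphi=0$, forcing $\delta\equiv0$ and $\varphi=0$ on $\omega^2(\Gamma)=\omega(\Gamma)$), so for $b\neq0$ one has $\|b\|_1\,\varphi(\Delta(S))<2\|b\|_1\,\varphi(\Delta(S))$ and the asserted strict inequality with the displayed factor $2$ follows.

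The step I expect to be genuinely hard is the passage to the real-closed-valued $\varphi\colon\omega(\Gamma)\to\bC$ that the statement asks for: property (T), the vector $\Omega$ and Cauchy--Schwarz all live in a Hilbert-space completion, while $\bR$ admits no such completion and, worse, mere positivity of a $\bC$-valued functional no longer implies Cauchy--Schwarz (Example \ref{notcp}), so one really wants the completely positive functionals produced by Theorem \ref{cpsep}. My plan is to reduce to the complex case by Tarski's transfer principle, in the spirit of the earlier sections: one fixes $b$, so that only the finitely many values $\varphi(c(g))$ with $g$ in the support of $b$ and $\varphi(c(s)^*c(s))$ with $s\in S$ occur, and replaces the analytic Kazhdan estimate by its algebraic shadow $\varphi(\xi^*\Delta(S)\xi)\ge\epsilon\,\varphi(\xi^*\xi)$ for $\xi\in\omega(\Gamma)$, which is a statement about $\varphi$ on $\omega(\Gamma)$ alone and hence survives the change of field. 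Reconciling the completion-bound vector $\Omega$ with this purely algebraic setting over $\bR$ is the crux, and is where I would expect to spend the bulk of the effort.
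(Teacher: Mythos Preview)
Your treatment of the complex-valued case is essentially the paper's: build the GNS representation, invoke Lemma \ref{nofixed} and property (T) to get $\Omega$, identify $\varphi$ on $\omega(\Gamma)=\omega^2(\Gamma)$ with $a\mapsto\langle\pi_\varphi(a)\Omega,\Omega\rangle$, and then combine the Kazhdan spectral gap $\langle\Delta(S)\Omega,\Omega\rangle\ge\epsilon\|\Omega\|^2$ with the $\ell^1$-bound. The paper packages the identification as ``define $\overline\varphi(a)=\langle\pi_\varphi(a)\Omega,\Omega\rangle$ on $\C[\Gamma]$ and check $\overline\varphi=\varphi$ on $\omega^2(\Gamma)$ via $\overline\varphi(c(h)^*c(g))=\langle\delta(g),\delta(h)\rangle$''; your additive-homomorphism argument for $\varphi(c(g))=\langle\delta(g),\Omega\rangle$ is an equivalent verification.

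Where your proposal diverges is the passage to general $\bR$, and here the paper's route is both different and much shorter than the Tarski-type argument you sketch. You anticipate a hard transfer problem because $\Omega$ lives in a Hilbert completion and positivity over $\bC$ need not give Cauchy--Schwarz. The paper simply avoids all of this by \emph{pushing $\varphi$ down to $\C$ via the standard-part map}. The point you are missing is that Theorem \ref{inner} has already been proved: for every $b\in\omega(\Gamma)^h$ there is a real constant $C(b)$ with $C(b)\Delta(S)\pm b\in\Sigma^2\omega(\Gamma)$. Hence \emph{any} positive $\varphi$ (no complete positivity needed) satisfies $|\varphi(b)|\le C(b)\,\varphi(\Delta(S))$ in $\bR$. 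In particular $\varphi(\Delta(S))>0$ for nontrivial $\varphi$, and after normalising $\varphi(\Delta(S))=1$ all values of $\varphi$ lie in $\mathcal O[i]$, where $\mathcal O$ is the convex hull of $\R$ in $\bR$. Composing with the residue map $\mathcal O\to\mathcal O/\mathfrak m=\R$ yields a genuine positive nontrivial $\C$-valued functional, to which the complex case applies; the resulting strict inequality in $\R$ lifts to $\bR$ since an element of $\mathcal O$ with strictly positive residue is strictly positive.

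So your worry about complete positivity is a red herring for this reduction, and no Tarski transfer or reconstruction of $\Omega$ over $\bR$ is required: Theorem \ref{inner} does exactly the boundedness work that makes the standard-part reduction go through.
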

\begin{proof}
Let us first assume $\bR=\R$ and $\bC=\C.$ We do the GNS construction as just described, and get some $\Omega\in L^2(\omega(\Gamma),\varphi)$ with $\delta(g)=\pi_\varphi(g)\Omega-\Omega,$ for all $g\in\Gamma.$ We set $$\overline\varphi\colon\C[\Gamma]\rightarrow\C; \quad \overline{\varphi}(a)=\langle \pi_\varphi(a)\Omega,\Omega\rangle$$ and compute:
\begin{eqnarray*}
\bar \varphi(c(h)^*c(g))&=& \langle \pi_{\varphi}(c(g)) \Omega,\pi_{\varphi}(c(h)) \Omega \rangle_{\varphi} \\
&=& \langle \delta(g), \delta(h) \rangle_{\varphi} \\
&=& \varphi(c(h)^*c(g)).
\end{eqnarray*} This shows that $\bar \varphi$ and $\varphi$ agree on $\omega^2(\Gamma)$ and hence on $\omega(\Gamma)$. If we now do the standard GNS-construction with respect to $\overline\varphi$, we see that there is a natural $\Gamma$-equivariant identification of $L^2(\C[\Gamma],\bar \varphi)$ and $L^2(\omega(\Gamma),\varphi)$.  Since the representation $\pi_\varphi$ has no fixed vectors we get $$\varphi(\Delta(S))=\overline{\varphi}(\Delta(S))= \langle \Delta(S)1,1\rangle_{\overline{\varphi}}\geq\epsilon\cdot \overline{\varphi}(1).$$ Since $\overline\varphi$ is positive and nontrivial, it follows from Remark \ref{rem:cim} that $\overline\varphi(1)>0$. So finally, again using Remark \ref{rem:cim} we find $$\epsilon\cdot\varphi(b)=\epsilon\cdot\overline{\varphi}(b)\leq \epsilon\cdot \Vert b\Vert_1\cdot \overline{\varphi}(1)<2\Vert b\Vert_1\cdot\varphi(\Delta(S)),$$ the desired result.

Now let $\bR$ be arbitrary and $\varphi\colon\omega(\Gamma)\rightarrow \bC$ positive and nontrivial. From Theorem \ref{inner} it follows that $\varphi(\Delta(S))>0.$ So we can assume without loss of generality that $\varphi(\Delta(S))=1$. Again from Theorem \ref{inner} we see that $\varphi$ now only takes values in $\mathcal{O}[i]$, where $\mathcal{O}$ is the convex hull of $\R$ in $\bR$. It is well known that $\mathcal{O}$ is a valuation ring in $\bR$ with maximal ideal $\mathfrak{m}$, and that $\mathcal{O}/\mathfrak{m}=\R$. The residue map $\mathcal{O}\rightarrow\mathcal{O}/\mathfrak{m}$ maps nonnegative elements to nonnegative elements.  So if we compose $\varphi$ with the residue map on $\mathcal{O}[i]$, we get a positive linear functional to $\C$. Since we know that the desired strict inequality holds now, it was already valid for $\varphi.$
\end{proof}

\begin{theorem}\label{innerkaz} 
Let $\Gamma$ be a group with finite generating symmetric set $S$. Consider the statements:
\begin{enumerate}
\item $\Gamma$ has Kazhdan's property (T).
\item $\Delta(S)$ is an algebraic interior point of $\Sigma^2\omega(\Gamma)$ in the $\ell^1$-metric of $\omega(\Gamma)^h$. More precisely, there exists a constant $\epsilon > 0$, such that for every $b\in\omega(\Gamma)^h$ with $\|b\|_1=1$ we have $$\Delta(S) + \epsilon \cdot b\in\Sigma^2\omega(\Gamma).$$
\end{enumerate}
The following implications hold: $(1)$ implies $(2)$ and $(2)$ implies $(1)$ under the additional assumption $H_2(\Gamma,\C)=0$.
\end{theorem}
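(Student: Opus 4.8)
The plan is to prove the two implications separately. For $(1)\Rightarrow(2)$ I will combine Proposition \ref{kazbound} with the Positivstellensatz of Theorem \ref{cpsep}; for $(2)\Rightarrow(1)$ I will argue by contraposition, manufacturing a positive functional on $\omega(\Gamma)$ out of an unbounded $1$-cocycle, the existence of the functional being the place where $H_2(\Gamma,\C)=0$ is spent.

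For $(1)\Rightarrow(2)$, let $\epsilon>0$ be a Kazhdan constant for $S$ and I claim $(2)$ holds with the constant $\epsilon/2$. Fix $b\in\omega(\Gamma)^h$ with $\|b\|_1=1$ and suppose toward a contradiction that $\Delta(S)+\tfrac{\epsilon}{2}b\notin\Sigma^2\omega(\Gamma)$. Since $\omega(\Gamma)$ is real reduced, Theorem \ref{cpsep} produces a completely positive $\C$-linear functional $\varphi\colon\omega(\Gamma)\to\bC$ over some real closed $\bR$ with $\varphi(a^*a)>0$ for $a\neq 0$ and $\varphi(\Delta(S)+\tfrac\epsilon2 b)<0$; in particular $\varphi$ is positive and nontrivial, and $\varphi(\Delta(S))>0$. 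Applying Proposition \ref{kazbound} to both $b$ and $-b$ (each of $\ell^1$-norm $1$) gives $\epsilon\,|\varphi(b)|<2\varphi(\Delta(S))$, whence $\varphi(\Delta(S)+\tfrac\epsilon2 b)\ge\varphi(\Delta(S))-\tfrac\epsilon2|\varphi(b)|>0$, contradicting the previous line. Hence $\Delta(S)+\tfrac\epsilon2 b\in\Sigma^2\omega(\Gamma)$ for every such $b$, which is exactly $(2)$.

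For $(2)\Rightarrow(1)$ I argue contrapositively: assuming $H_2(\Gamma,\C)=0$ and that $\Gamma$ does \emph{not} have property (T), I produce for each $\epsilon>0$ a hermitian $b$ with $\|b\|_1=1$ and $\Delta(S)+\epsilon b\notin\Sigma^2\omega(\Gamma)$. By the definition of (T) there is a unitary representation $\pi\colon\Gamma\to U(H)$ and a $1$-cocycle $\beta\colon\Gamma\to H$ that is not inner; since a bounded cocycle is always inner (its affine orbit has a circumcenter), $\beta$ is unbounded. The cocycle identity says precisely that the linear map $T\colon\omega(\Gamma)\to H$, $c(g)\mapsto\beta(g)$, is $\C[\Gamma]$-linear for the left-multiplication module structure, i.e.\ $T(wx)=\pi(w)Tx$. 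I then define a functional on $\omega^2(\Gamma)$ by $\varphi(y^*x):=\langle Tx,Ty\rangle$ for $x,y\in\omega(\Gamma)$. Granting this is well defined, it is immediate that $\varphi(\sum_i a_i^*a_i)=\sum_i\|Ta_i\|^2\ge 0$, so $\varphi\ge 0$ on $\Sigma^2\omega(\Gamma)$, while $\varphi(\Delta(S))=\tfrac12\sum_{s\in S}\|\beta(s)\|^2=:M$ is a fixed positive real. Using unboundedness, for a given $\epsilon$ choose $g$ with $\|\beta(g)\|^2>4M/\epsilon$ and set $b:=-\tfrac14\,c(g)^*c(g)=-\tfrac14(2-g-g^{-1})$, which is hermitian with $\|b\|_1=1$; then $\varphi(\Delta(S)+\epsilon b)=M-\tfrac\epsilon4\|\beta(g)\|^2<0$, so $\Delta(S)+\epsilon b\notin\Sigma^2\omega(\Gamma)$. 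As this holds for every $\epsilon>0$, statement $(2)$ fails.

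The main obstacle is the well-definedness of $\varphi$ on $\omega^2(\Gamma)$, and this is exactly where $H_2(\Gamma,\C)=0$ enters. The sesquilinear functional $x\otimes\overline y\mapsto\langle Tx,Ty\rangle$ on $\omega(\Gamma)\otimes_\C\overline{\omega(\Gamma)}$ and the multiplication $x\otimes\overline y\mapsto y^*x$ both factor through the balanced tensor product $\omega(\Gamma)\otimes_{\C[\Gamma]}\overline{\omega(\Gamma)}$: the $\C[\Gamma]$-linearity of $T$ gives $\langle T(wx),Ty\rangle=\langle Tx,T(w^*y)\rangle$, and the involution gives the analogous identity for the product. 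The kernel of the induced multiplication map $\omega(\Gamma)\otimes_{\C[\Gamma]}\overline{\omega(\Gamma)}\to\omega^2(\Gamma)$ is naturally identified with $H_2(\Gamma,\C)$ — the degree-two analogue of the identification $\omega(\Gamma)/\omega^2(\Gamma)=H_1(\Gamma,\C)$ already used in Theorem \ref{inner} — so under the hypothesis $H_2(\Gamma,\C)=0$ this map is injective and the cocycle pairing descends to the desired functional on $\omega^2(\Gamma)$. Verifying this homological identification (for instance via the partial free resolution $0\to\omega(\Gamma)\to\C[\Gamma]\to\C\to 0$ together with dimension shifting in $\mathrm{Tor}^{\C[\Gamma]}_{*}(\C,\C)$) is the technical heart of the converse.
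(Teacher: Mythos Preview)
Your proof is correct and follows essentially the same route as the paper: for $(1)\Rightarrow(2)$ you combine Proposition~\ref{kazbound} with Theorem~\ref{cpsep} exactly as the authors do, and for $(2)\Rightarrow(1)$ you build the same positive functional $\varphi(y^*x)=\langle Tx,Ty\rangle$ from a cocycle, identify the well-definedness obstruction with $H_2(\Gamma,\C)$ via the Tor dimension-shift on $0\to\omega(\Gamma)\to\C[\Gamma]\to\C\to0$, and deduce boundedness of cocycles. The only cosmetic differences are that you phrase the converse contrapositively with an explicit choice of $b=-\tfrac14 c(g)^*c(g)$, and that you write the balanced tensor product as $\omega(\Gamma)\otimes_{\C[\Gamma]}\overline{\omega(\Gamma)}$ rather than $\omega(\Gamma)\otimes_{\C[\Gamma]}\omega(\Gamma)$ as in the paper (the involution identifying the two).
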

\begin{proof}
Implication $(1) \Rightarrow (2)$ is a direct consequence Theorem \ref{cpsep} and Proposition \ref{kazbound}. 
Let us now prove $(2) \Rightarrow (1)$  under the additional assumption $H_2(\Gamma,\C)=0$. We first prove two lemmas.

\begin{lemma} Let $\Gamma$ be a group.
There is an exact sequence as follows:
$$0 \to H_2(\Gamma,\C) \to \omega(\Gamma) \otimes_{\C[\Gamma]} \omega(\Gamma) \to \omega(\Gamma) \to H_1(\Gamma,\C) \to 0$$
\end{lemma}
\begin{proof}
It is well-known that $$\omega(\Gamma)/\omega(\Gamma)^2 = \Gamma_{ab} \otimes_{\Z} \C = H_1(\Gamma,\C).$$ This shows exactness at $\omega(\Gamma)$ and it remains to show exactness at $\omega(\Gamma) \otimes_{\C[\Gamma]} \omega(\Gamma)$.
Since $0 \to \omega(\Gamma) \to \C[\Gamma] \to \C \to 0$ is exact, we have
$$H_2(\Gamma,\C) = H_1(\Gamma,\omega(\Gamma)) = {\rm Tor}_1^{\C[\Gamma]}(\C,\omega(\Gamma)).$$ Again, we get an exact sequence
$${\rm Tor}_1^{\C[\Gamma]}(\C[\Gamma],\omega(\Gamma)) \to {\rm Tor}_1^{\C[\Gamma]}(\C,\omega(\Gamma)) \to \omega(\Gamma) \otimes_{\C[\Gamma]} \omega(\Gamma) \to \C[\Gamma] \otimes_{\C[\Gamma]} \omega(\Gamma).$$
This finishes the proof since ${\rm Tor}_1^{\C[\Gamma]}(\C[\Gamma],\omega(\Gamma)) = 0.$
\end{proof}

If $\Delta(S)$ is an algebraic interior point of $\Sigma^2 \omega(\Gamma)$ in $\omega(\Gamma)^h$, then $\omega(\Gamma)=\omega^2(\Gamma)$, i.e.  $H_1(\Gamma,\C) =0$. Hence $H_2(\Gamma,\C)=0$ ensures that the natural map $$\omega(\Gamma) \otimes_{\C[\Gamma]} \omega(\Gamma) \to \omega(\Gamma)$$ is an isomorphism. This is what we are going to use.

\begin{lemma} \label{lem1}
Let $\pi \colon \Gamma \to U(H)$ be a unitary representation and $\delta \colon \Gamma \to H$ be a 1-cocycle with respect to $H$. Then,
$$\varphi(c(h)^* \otimes c(g)) := \langle \delta(g),\delta(h)\rangle$$
yields a well-defined positive linear functional on $\omega(\Gamma)=\omega(\Gamma) \otimes_{\C[\Gamma]} \omega(\Gamma)$.

\end{lemma}\label{bla}
\begin{proof}
It is clear that $(c(h)^*,c(g)) \mapsto \langle \delta(g),\delta(h)\rangle$ defines a bilinear map on $\omega(\Gamma)$, i.e. a linear map 
$\varphi' \colon \omega(\Gamma) \otimes_{\C} \omega(\Gamma) \to \C$. We show that this map passes to $\omega(\Gamma) \otimes_{\C[\Gamma]} \omega(\Gamma)$. Let $g,h,k \in \Gamma$, then
\begin{eqnarray*}
\varphi'(c(h)^*k \otimes c(g)) &=& \varphi'(c(h^{-1})k \otimes c(g))\\
&=&\varphi'((c(h^{-1}k) - c(k) ) \otimes c(g)) \\
&=&\varphi'(c(k^{-1}h)^* \otimes c(g)) - \varphi'(c(k^{-1})^* \otimes c(g) ) \\
&=& \langle \delta(g), \delta(k^{-1}h) \rangle - \langle \delta(g) , \delta(k^{-1}) \rangle \\
&=& \langle \delta(g), \pi(k^{-1}) \delta(h) \rangle \\
&=& \langle \pi(k) \delta(g),  \delta(h) \rangle \\
&=& \langle \delta(kg) ,\delta(h) \rangle - \langle \delta(k) , \delta(h) \rangle \\
&=& \varphi'(c(h)^* \otimes c(kg)) - \varphi'(c(h)^* \otimes c(k) ) \\
&=& \varphi'(c(h^*) \otimes k c(g)).
\end{eqnarray*} We can now understand $\varphi'$ as a linear map on $\omega(\Gamma),$ via the above isomorphism to  $\omega(\Gamma)\otimes_{\C[\Gamma]}\omega(\Gamma).$ Since $a^*a$ corresponds to $a^*\otimes a$ one easily checks that $\varphi$ is positive on $\omega(\Gamma)$.
\end{proof}

We continue with the proof of Theorem \ref{innerkaz}. Condition (2) in Theorem \ref{innerkaz} and Lemma \ref{bla} imply that any 1-cocycle with respect to any unitary representation is bounded. This is well-known to imply Kazhdan's property (T) for $\Gamma$.
\end{proof}

\begin{remark}
It is not clear whether the condition $H_2(\Gamma,\C)=0$ is necessary.
\end{remark}

There is an analogue of the implication $(1) \Rightarrow (2)$ in Theorem \ref{innerkaz} in $\ell^1 \Gamma := \{ \sum_{g \in \Gamma} a_g g \mid \sum_{g \in \Gamma} |a_g|< \infty \}$. We set $\omega^{1} \Gamma := \overline{\omega(\Gamma)}^{\|.\|_1}$ and define
$$\Sigma^{2,1} \omega(\Gamma):= \left\{ \sum_{i=1}^{\infty} a_i^*a_i \mid a_i \in \omega[\Gamma], \sum_{i=1}^{\infty} \|a_i\|^2_1< \infty \right\}$$ and
$$\Sigma^{2,1} \ell^1(\Gamma):= \left\{ \sum_{i=1}^{\infty} a_i^*a_i \mid a_i \in \ell^1[\Gamma], \sum_{i=1}^{\infty} \|a_i\|^2_1< \infty \right\}.$$

We note that $\|a\|_1 - a \in \Sigma^{2,1} \ell^1(\Gamma)$ for every hermitean element $a \in \ell^1 \Gamma$. Indeed,
\begin{eqnarray*}
\|a\|_1 - a &=& \sum_{g \in G} 2|a_g| - a_g g - \bar a_g g^{-1} \\
&=& \sum_{g \in G} \left(|a_g|^{1/2} - \frac{a_g}{|a_g|^{1/2}} g \right)^* \left(|a_g|^{1/2} - \frac{a_g}{|a_g|^{1/2}} g \right).
\end{eqnarray*}
Hence, for $\varphi \colon \ell^1 \Gamma \to \bC$, $\C$-linear and positive on $\Sigma^{2,1} \ell^1 \Gamma$, $|\varphi(a)| \leq \|a\|_1$ for all $a \in \ell^1 \Gamma$.
A priori, there is no reason to assume that $\Sigma^{2,1} \ell^1 \Gamma$ or $\Sigma^{2,1} \omega^1 \Gamma$ are closed or have non-trivial interiour. Nevertheless, our result shows:

\begin{corollary}\label{innerkaz2} Let $\Gamma$ be a Kazhdan group with finite generating symmetric set $S$ and Kazhdan constant $\epsilon$. Then for every $b\in\omega^1(\Gamma)^h$ with $\|b\|_1=1$ we have $$ \Delta(S) + \varepsilon \cdot b\in\Sigma^{2,1}\omega(\Gamma).$$
\end{corollary}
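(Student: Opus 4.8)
The plan is to obtain Corollary \ref{innerkaz2} as an $\ell^1$-completion of the implication $(1) \Rightarrow (2)$ from Theorem \ref{innerkaz}. The finitely-supported statement there gives us, for every $b \in \omega(\Gamma)^h$ with $\|b\|_1 = 1$, a membership $\Delta(S) + \epsilon \cdot b \in \Sigma^2 \omega(\Gamma)$, where $\epsilon$ is governed by the Kazhdan constant (via Proposition \ref{kazbound}). The task is to extend this from finitely-supported hermitean $b$ to all $b \in \omega^1(\Gamma)^h$ with $\|b\|_1 = 1$, landing in the larger cone $\Sigma^{2,1}\omega(\Gamma)$ of $\ell^1$-convergent sums of squares. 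The natural route is a density-plus-summability argument.

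First I would fix $b \in \omega^1(\Gamma)^h$ with $\|b\|_1 = 1$ and choose finitely-supported hermitean approximants $b_n \in \omega(\Gamma)^h$ converging to $b$ in $\|.\|_1$; one can arrange $\|b - b_n\|_1$ to decay geometrically, say $\|b - b_n\|_1 \leq 2^{-n}$, and with each $b_n \in \omega(\Gamma)$ by truncating the support while correcting the augmentation (which costs nothing since the tail augmentations tend to zero). The key estimate is quantitative: from Theorem \ref{innerkaz}(2) we know $\Delta(S) + \epsilon \cdot c \in \Sigma^2\omega(\Gamma)$ for every finitely-supported hermitean $c$ with $\|c\|_1 = 1$, hence by scaling $\epsilon \cdot \|c\|_1 \cdot \Delta(S) + \epsilon \cdot c$ — more usefully, $\Vert c\Vert_1 \cdot \Delta(S) + c/\lambda$ type bounds — so that any hermitean $c$ of small $\ell^1$-norm can be written as a single sum of squares from $\omega(\Gamma)$ of controlled size, roughly $\Delta(S)\cdot \|c\|_1/\epsilon$ plus $c$ absorbed. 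Writing $b = b_1 + \sum_{n\geq 1}(b_{n+1} - b_n)$ as a telescoping series of small hermitean increments, each increment $d_n := b_{n+1}-b_n$ satisfies $\|d_n\|_1 \leq 3\cdot 2^{-n}$, so by the scaled finite statement each $\Delta(S)\cdot\|d_n\|_1/\epsilon \pm d_n$ lies in $\Sigma^2\omega(\Gamma)$, giving hermitean-square representations $d_n = \sum_i a_{n,i}^* a_{n,i} - \Delta(S)\|d_n\|_1/\epsilon$ with $\sum_i \|a_{n,i}\|_1^2$ controlled by $\|d_n\|_1$.

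The crux is then to assemble these pieces so that the total collection of squares is $\ell^1$-square-summable in the sense defining $\Sigma^{2,1}\omega(\Gamma)$, i.e.\ $\sum \|a_i\|_1^2 < \infty$. Since each increment contributes squares whose $\|.\|_1^2$-weight is $O(2^{-n})$, the geometric decay guarantees absolute summability of $\sum_n \sum_i \|a_{n,i}\|_1^2$, and the leftover $\Delta(S)$-multiples telescope with total weight $\sum_n \|d_n\|_1/\epsilon < \infty$, which is why the fixed displacement $\Delta(S)$ on the left absorbs them. The main obstacle I anticipate is bookkeeping the augmentation and the square-summability simultaneously: one must ensure the truncations remain inside $\omega(\Gamma)$ (augmentation zero) while keeping the $\ell^1$-tails under the geometric control, and verify that the resulting infinite sum $\Delta(S) + \epsilon\cdot b = \sum_{n,i} a_{n,i}^* a_{n,i}$ converges in $\|.\|_1$ to the claimed element with $\sum \|a_{n,i}\|_1^2 < \infty$. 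Once the square-summability is in hand, membership in $\Sigma^{2,1}\omega(\Gamma)$ is immediate from its definition, completing the argument.
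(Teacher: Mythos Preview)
Your telescoping scheme has a genuine gap at the step where you assert that ``each increment contributes squares whose $\|\cdot\|_1^2$-weight is $O(2^{-n})$'', equivalently that $\sum_i \|a_{n,i}\|_1^2$ is controlled by $\|d_n\|_1$. Theorem \ref{innerkaz} only tells you that $\Delta(S)\|d_n\|_1/\epsilon + d_n \in \Sigma^2\omega(\Gamma)$, i.e.\ that \emph{some} finite representation $\sum_i a_{n,i}^* a_{n,i}$ exists; it carries no quantitative information about the factors. Applying the trace $\tau$ gives $\sum_i \|a_{n,i}\|_2^2 = \tau\bigl(\Delta(S)\|d_n\|_1/\epsilon + d_n\bigr) = |S|\,\|d_n\|_1/\epsilon$, but since $\|\cdot\|_1 \geq \|\cdot\|_2$ this is a \emph{lower} bound on $\sum_i \|a_{n,i}\|_1^2$, not the upper bound you need. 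Without that upper bound the aggregate $\sum_{n,i} \|a_{n,i}\|_1^2$ need not be finite, so the infinite sum is not known to lie in $\Sigma^{2,1}\omega(\Gamma)$ and the argument collapses exactly at the point you flag as ``the crux''.

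The paper does not write out a proof, but the computation it places immediately before the corollary is the substitute for the missing estimate: the \emph{explicit} identity
\[
\|a\|_1 - a \;=\; \sum_{g}\Bigl(|a_g|^{1/2} - \tfrac{a_g}{|a_g|^{1/2}}\,g\Bigr)^{\!*}\Bigl(|a_g|^{1/2} - \tfrac{a_g}{|a_g|^{1/2}}\,g\Bigr)
\]
exhibits $\|a\|_1 - a \in \Sigma^{2,1}\ell^1(\Gamma)$ with $\sum_g \|\cdot\|_1^2 = 4\|a\|_1$ by construction; this is the $\ell^1$-analogue of Remark \ref{rem:cim}, and it forces any $\bR$-valued functional that is nonnegative on $\Sigma^{2,1}\ell^1(\Gamma)$ to satisfy $|\varphi(a)| \leq \|a\|_1\,\varphi(1)$. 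The intended route is therefore not to take an $\ell^1$-limit in the \emph{conclusion} of Theorem \ref{innerkaz}, but to rerun the separation argument and the proof of Proposition \ref{kazbound} directly for $b \in \omega^1(\Gamma)^h$ and for functionals nonnegative on $\Sigma^{2,1}\omega(\Gamma)$: the GNS construction and the inner-cocycle step go through verbatim, and the explicit $\Sigma^{2,1}$-identity above supplies the inequality $\overline\varphi(b) \leq \|b\|_1\,\overline\varphi(1)$ at the last step, which in the finitely-supported case came from Remark \ref{rem:cim}.
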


\section{Group Algebras of Free Groups} \label{free}

During this section let $\Gamma=F_n$ be the free group on $n$ generators $g_1,\ldots,g_n$ or $\Gamma= F_{\infty}$. Schm\"udgen \cite{schm} has proven that an element from the group algebra $\C[\Gamma]$ that is nonnegative under each finite dimensional $*$-representation is a sum of squares. We demonstrate how his result  can be re-proven with our real closed separation approach. The main idea of our proof is the same as in Schm\"udgen's work. However, instead of a partial GNS-construction we use a full GNS-construction, but over a general real closed field. We then reduce to the standard real numbers by Tarski's transfer principle.

\begin{theorem}[Schm\"udgen] \label{schm}Let $\Gamma=F_n$ be the free group on $n$ generators.
If $b\in\C[\Gamma]^h$ is mapped to a positive semidefinite matrix under each finite dimensional $*$-representation  of $\C[\Gamma]$, then $b\in\Sigma^2\C[\Gamma].$ 
\end{theorem}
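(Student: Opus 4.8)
The plan is to prove the contrapositive: assuming $b\notin\Sigma^2\C[\Gamma]$, I will manufacture a finite-dimensional $*$-representation over $\C$ in which $b$ fails to be positive semidefinite. Since $\C[\Gamma]$ is real reduced, Theorem \ref{cpsep} supplies a real closed extension field $\bR$ and a completely positive functional $\varphi\colon\C[\Gamma]\to\bC$ with $\varphi(b)<0$ and $\varphi(a^*a)>0$ for $a\neq 0$. By Lemma \ref{fext} the extension $\id\otimes\varphi$ is positive on $\bC\otimes_\C\C[\Gamma]$, so I can run the GNS construction purely algebraically over $\bC$: take $H$ to be the quotient of $\bC\otimes_\C\C[\Gamma]$ by the null space of $\langle a,b\rangle:=\varphi(b^*a)$, which is then positive definite, with cyclic vector $\Omega$ the image of $1$ and $\langle\pi_\varphi(a)\Omega,\Omega\rangle=\varphi(a)$. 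No metric completion is taken; this is exactly where working over a real closed field lets one bypass Schm\"udgen's partial construction. Each generator acts as a unitary $U_i=\pi_\varphi(g_i)$, since $g_i^*g_i=e$.

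The heart of the argument is a finite-dimensional truncation that repairs unitarity, and this is where freeness of $F_n$ is decisive. Let $L$ be the largest word length appearing in $b$, and filter $H$ by $H_N:=\mathrm{span}_\bC\{\pi_\varphi(w)\Omega : |w|\le N\}$, using that the elements of $F_n$ are reduced words. Left multiplication by $g_i$ changes reduced word length by $\pm1$, so $U_i$ restricts to an isometry $H_{N-1}\to H_N$; because the inner product is positive definite on the finite-dimensional space $H_N$ (Gram--Schmidt is available over $\bR[i]$), this isometry extends to a genuine unitary $\tilde U_i$ of $H_N$, by matching the orthogonal complement of $H_{N-1}$ in $H_N$ isometrically with that of its image. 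Choosing $N=L$ and setting $\tilde\pi(g_i):=\tilde U_i$, a telescoping computation that controls word length at each application shows $\tilde\pi(w)\Omega=\pi_\varphi(w)\Omega$ for every $w$ with $|w|\le L$, whence $\langle\tilde\pi(b)\Omega,\Omega\rangle=\varphi(b)<0$. Since $F_n$ is free, the arbitrary unitaries $\tilde U_1,\dots,\tilde U_n$ automatically define a $*$-representation of $\C[\Gamma]$ on $H_L$, now finite-dimensional over $\bC$, in which $b$ is not positive semidefinite.

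Finally I would invoke Tarski's transfer principle. With $d:=\dim_\bC H_L$ fixed, the existence of unitary $d\times d$ matrices $U_1,\dots,U_n$ and a vector $\xi$ with $\langle b(U)\xi,\xi\rangle<0$ --- where $b(U)$ denotes the substitution $g_i\mapsto U_i$, $g_i^{-1}\mapsto U_i^*$ --- is a first-order sentence in the language of ordered fields, after coding complex entries as pairs of real coordinates, unitarity as polynomial equations, and the real quantity $\langle b(U)\xi,\xi\rangle$ (real because $b$ is hermitian) as a polynomial inequality. This sentence holds over $\bR$ by the previous paragraph; since $\R$ and $\bR$ are both real closed, hence elementarily equivalent, it also holds over $\R$. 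The resulting unitaries over $\C=\R[i]$ give a finite-dimensional $*$-representation of $\C[F_n]$ in which $b$ is not positive semidefinite, contradicting the hypothesis. Hence $b\in\Sigma^2\C[\Gamma]$; the case $\Gamma=F_\infty$ reduces to $F_n$ since $b$ involves only finitely many generators.

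The step I expect to be the main obstacle is the unitary-repair truncation: one must verify the length bookkeeping so that the extended operators reproduce $\pi_\varphi$ on every word occurring in $b$, and check that extending an isometry between subspaces to a full unitary is legitimate over $\bR[i]$ with the positive definite form. Everything else is a direct assembly of Theorem \ref{cpsep}, the algebraic GNS construction, freeness, and transfer.
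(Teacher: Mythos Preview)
Your overall strategy coincides with the paper's: separate with a completely positive functional, run an algebraic GNS over $\bC$, truncate to finite dimension, repair unitarity, exploit freeness to get a $*$-representation, then transfer to $\C$ via Tarski. The difference lies in how you repair unitarity, and there your argument as written has a gap.

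You extend each $U_i=\pi_\varphi(g_i)|_{H_{N-1}}$ to a unitary $\tilde U_i$ of $H_N$ and then set $\tilde\pi(g_i^{-1})=\tilde U_i^{-1}$. But $\tilde U_i^{-1}$ need not agree with $\pi_\varphi(g_i^{-1})$ on $H_{N-1}$: your construction guarantees $\tilde U_i=U_i$ on $H_{N-1}$, hence $\tilde U_i^{-1}=U_i^{-1}$ only on $U_iH_{N-1}$, which is a different subspace. In the telescoping for a reduced word $w=s_1\cdots s_k$ with $s_1=g_i^{-1}$, the last step applies $\tilde U_i^{-1}$ to $\pi_\varphi(s_2\cdots s_k)\Omega$; for this to return $\pi_\varphi(w)\Omega$ you need $\pi_\varphi(w)\Omega\in H_{N-1}$, i.e.\ $k\le N-1$. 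With your choice $N=L$ this fails for words of maximal length beginning with an inverse generator. Taking $N=L+1$ repairs it: then for every $|w|=k\le L$ one has $\pi_\varphi(w)\Omega\in H_L=H_{N-1}$, so $\tilde U_i\,\pi_\varphi(w)\Omega=\pi_\varphi(g_iw)\Omega$ and hence $\tilde U_i^{-1}\pi_\varphi(s_2\cdots s_k)\Omega=\pi_\varphi(w)\Omega$ as desired.

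The paper sidesteps this asymmetry by using Choi's unitary dilation instead of your isometry-extension: one compresses $\pi(g_i)$ to the contraction $M_i:=p\circ\pi(g_i)$ on a subspace $H'$ containing all words of length $\le d$, and dilates $M_i$ to a unitary on $H'\oplus H'$. Since $M_i^*=p\circ\pi(g_i^{-1})$, the dilation treats $g_i$ and $g_i^{-1}$ symmetrically: whenever $\eta\in H'$ and $\pi(g_i^{\pm1})\eta\in H'$ one gets $U_i^{\pm1}(\eta,0)=(\pi(g_i^{\pm1})\eta,0)$, and the suffix-length bookkeeping then works with $H'$ indexed exactly by words of length $\le d$. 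Your extension method is valid once the index is shifted by one, but Choi's trick makes the verification cleaner and is what the paper uses.
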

\begin{proof} Assume that $b\notin\Sigma^2\C[\Gamma].$ By Theorem \ref{cpsep} there is a real closed extension field $\bR$ of $\R$ and a completely positive $\C$-linear functional $\varphi\colon\C[\Gamma]\rightarrow\bC$ with $\varphi(a^*)=\overline{\varphi(a)}$, such that  $\varphi(b)<0.$ By Lemma \ref{fext}, the canonical $\bC$-linear extension   of $ \varphi$  to $A=\bC\otimes_\C \C[\Gamma]$ is still positive, and we denote it again by $\varphi$. We apply the usual GNS-construction to $A$. We note that  $$N=\{ a\in A \mid \varphi(a^*a)=0\}$$  is a $*$-subspace of the  $\bC$-vector space $A$, which follows from the Cauchy-Schwarz inequality, as shown in Corollary \ref{gauge}. We denote the quotient space $A/N$ by $H$, and note that $$\langle a+N,c+N\rangle_\varphi :=\varphi(c^*a)$$ is a well-defined and positive definite $\bC$-valued sesqui-linear form on $H$. We also note that left-multiplication from $\C[\Gamma]$ on $A$ is well-defined on $H$, as explained in Section \ref{sec:kaz}. So we have a $\C$-linear $*$-representation $$\pi\colon\C[\Gamma]\rightarrow \mathcal{L}(H)$$  with   $\langle\pi(b)\xi,\xi\rangle_\varphi<0,$ where $\xi=1+N$.

Now let $H'$ be a finite dimensional $*$-subspace of $H$, containing the residue classes of all words in the generators $g_i$  of length at most $d$, where $d$ is the maximal word length in $b$. We can choose an orthonormal basis $v_1,\ldots v_m$ of $H'$, using the usual Gram-Schmidt procedure over $\bC$. So there is an orthogonal projection map $p\colon H\rightarrow H',$ defined as $$p\colon h\mapsto \sum_{i=1}^m \langle h,v_i\rangle v_i.$$ Define $$M_i:=p\circ \pi(g_i)\in\mathcal{L}(H').$$ It is easy to see that all $M_i$ are contractions, and thus the linear operators $\sqrt{1-M_i^*M_i}$ and $\sqrt{1-M_iM_i^*}$  exist on $H'$. Using Choi's matrix trick \cite[Theorem 7]{choi}, we define $$U_i:= \left(\begin{array}{cc}M_i & \sqrt{I-M_iM_i^*} \\ \sqrt{I-M_i^*M_i} & -M_i^*\end{array}\right)\in\mathcal{L}(H'\oplus H').$$ The $U_i$ are checked to be unitary operators, and thus yield a $\C$-linear  $*$-representation $\widetilde{\pi}$ of $\C[\Gamma]$ on $H'\oplus H'$. Since the residue classes of all words occuring in $b$ belong to $H',$ and by the definition of the $U_i$, we find $$\langle \widetilde\pi(b)\xi',\xi'\rangle_{H'\oplus H'} =\langle \pi(b)\xi,\xi\rangle_\varphi<0,$$ where $\xi'=(\xi,0).$
Now finally, since $H'\oplus H'$ is finite dimensional, the existence of such a representation over $\bC$ implies the existence over $\C$, by Tarski's transfer principle. This finishes the proof.
\end{proof}

\begin{remark}
The proof becomes even simpler when considering the $*$-algebra of polynomials in non-commuting variables $\C\langle y_1,\ldots,y_n, z_1,\ldots,z_n \rangle$ with $y_i^*=z_i$, or $\C\langle z_1,\ldots,z_n\rangle$ with $z_i^*=z_i,$ instead of the group algebra of a free group. The reason is that one is not  forced to make the matrices $M_i$ unitary (only hermitian in the second case). So Theorem \ref{schm} also  holds for these polynomial algebras. This was first proven by Helton \cite{MR1933721}.
\end{remark}

It is an interesting problem to study the class of groups $\Gamma$, for which positivity of $a \in \C[\Gamma]$ in every finite-dimensional unitary representation implies that $a \in \Sigma^2 \C[\Gamma]$. 
It is clear that in order for an analogous argument to work, $\Gamma$ has to be residually finite-dimensional in a very strong sense. Residual finite-dimensionality of a group means that every unitary representation on a Hilbert space can be approximated in the Fell topology by finite-dimensional representations, see \cite{boz} for details. If -- more generally -- every generalized  unitary representation of $\Gamma$ on a Hilbert space can be approximated on finitely many vectors by generalized finite-dimensional unitary representations, then everything works. With additional work, this can be carried out for virtually free groups \cite{schm}. 

Deep results of Scheiderer \cite{scheid} imply that the conclusion holds for $\Z^2$. However, by a classical results of Rudin \cite{rudin}, the group $\Z^3$ does not satisfy the desired conclusion, and the same holds for every group which contains $\Z^3$. This is also implied by seminal work of Scheiderer, who showed that \cite[Theorem 6.2]{schei1999} the existence of positive elements which are not sum of squares under general assumptions in dimension $\geq 3$.

This shows that the theory of generalized unitary representations is fundamentally different and new pathologies occur.

An intruiging and possibly manageable case is the case of surface groups. Lubotzky and Shalom showed that surface groups are residually finite-dimensional \cite{lubsh}. It is quite possible that their methods extend and lead to a resolution of the case of surface groups.

\begin{conjecture}
Let $G$ be a surface group. Every element $a \in \C[\Gamma]^h$, which is positive semidefinite in every finite-dimensional unitary representation lies in $\Sigma^2 \C[\Gamma]$.
\end{conjecture}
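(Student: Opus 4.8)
The plan is to follow verbatim the strategy used for free groups in the proof of Theorem \ref{schm}, and to isolate the single step that genuinely requires new input in the surface case. Suppose, towards a contradiction, that $a\in\C[\Gamma]^h$ is positive semidefinite in every finite-dimensional unitary representation but $a\notin\Sigma^2\C[\Gamma]$. Since $\C[\Gamma]$ is real reduced, Theorem \ref{cpsep} supplies a real closed extension field $\bR$ of $\R$ and a completely positive $\C$-linear functional $\varphi\colon\C[\Gamma]\to\bC$ with $\varphi(a^*)=\overline{\varphi(a)}$ and $\varphi(a)<0$. Extending $\varphi$ to $\bC\otimes_\C\C[\Gamma]$ by Lemma \ref{fext} and performing the generalized GNS-construction exactly as in the proof of Theorem \ref{schm}, we obtain a $\C$-linear $*$-representation $\pi\colon\C[\Gamma]\to\mathcal{L}(H)$ on a $\bC$-inner product space $H$, together with a cyclic vector $\xi=1+N$ such that $\langle\pi(a)\xi,\xi\rangle_\varphi<0$.

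At this point the free-group argument compresses $\pi$ to the finite-dimensional subspace $H'$ spanned by the residue classes of the words of length at most the maximal word length occurring in $a$, and then uses Choi's matrix trick \cite{choi} to dilate each compressed generator to a unitary on $H'\oplus H'$. For a surface group this last step is precisely where the difficulty lies: the generators $x_1,y_1,\dots,x_g,y_g$ are not free but satisfy the defining relation $\prod_{i=1}^{g}[x_i,y_i]=1$ (and similarly $\prod_i x_i^2=1$ in the non-orientable case), and an independent Choi dilation of each generator will in general destroy this relation. What one needs instead is a \emph{generalized, relation-preserving} form of residual finite-dimensionality: for every finite set of vectors $F\subset H$ and every positive $\epsilon\in\bR$ there should exist a finite-dimensional generalized unitary representation $\tilde\pi\colon\Gamma\to U(\bC^N)$ \emph{respecting the surface relation}, whose matrix coefficients on $F$ approximate those of $\pi$ closely enough that $\langle\tilde\pi(a)\xi',\xi'\rangle<0$ persists for a suitable unit vector $\xi'$.

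To produce such a $\tilde\pi$ I would attempt to transport the residual finite-dimensionality theorem of Lubotzky and Shalom \cite{lubsh} into the real closed setting. Their result shows that every unitary representation of a surface group over $\C$ is approximated in the Fell topology by finite-dimensional ones; the key is to recast this approximation as a statement that can be moved across fields. Concretely, the assertion ``there exist unitary matrices $U_1,V_1,\dots,U_g,V_g\in U(\bC^N)$ satisfying $\prod_i[U_i,V_i]=1$ and a unit vector $\eta$ with $\langle P(U,V)\eta,\eta\rangle<0$'', where $P$ is the hermitian matrix of coefficients encoding $a$, is a first-order sentence in the language of real closed fields (unitarity, the surface word, and the negativity of a hermitian form are all polynomial conditions in the real and imaginary parts of the matrix entries). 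If the generalized GNS-representation $\pi$ can be approximated by such finite-dimensional data over $\bC$ at all, then this existential sentence holds over $\bC$, and by Tarski's transfer principle it holds over $\C$ as well, furnishing a genuine finite-dimensional unitary representation of $\Gamma$ in which $a$ is not positive semidefinite. This contradicts the hypothesis and proves $a\in\Sigma^2\C[\Gamma]$.

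The main obstacle is therefore exactly the relation-preserving approximation described above. In the free case Choi's trick yields an \emph{exact, explicit} dilation to unitaries with no relations to respect, whereas for a surface group one must dilate the compressed operators $M_i=p\circ\pi(g_i)$ to unitaries that still close up into the word $\prod_i[U_i,V_i]=1$, while simultaneously controlling finitely many matrix coefficients. I expect that adapting the deformation-theoretic methods of Lubotzky and Shalom -- rather than the naive Choi dilation -- is the crucial new ingredient, and that the compatibility of their approximations with passage to a real closed field (so that the concluding Tarski transfer applies) is the delicate technical point that, for now, keeps the statement at the level of a conjecture.
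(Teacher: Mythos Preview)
The statement you are addressing is presented in the paper as a \emph{conjecture}, not a theorem: the paper gives no proof, only the remark that the methods of Lubotzky and Shalom \cite{lubsh} might extend to resolve the surface case. Your write-up is likewise not a proof but a strategy together with an explicit acknowledgement of the gap, and in that sense it is entirely in line with the paper's own position.

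Your analysis of where the free-group argument breaks down is accurate and matches the paper's implicit reasoning: the Choi dilation produces unitaries with no relations, so the surface relation $\prod_i[x_i,y_i]=1$ will not survive an independent dilation of the generators. You correctly isolate the missing ingredient as a relation-preserving, generalized (i.e.\ over $\bC=\bR[i]$) form of residual finite-dimensionality, and you correctly note that the Tarski transfer at the end is unproblematic once such an approximation exists, since the relevant conditions are first-order. This is precisely the heuristic the paper gestures at when it says that if ``every generalized unitary representation of $\Gamma$ \dots\ can be approximated on finitely many vectors by generalized finite-dimensional unitary representations, then everything works.''

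In short: there is no proof in the paper to compare against, and your proposal does not claim to be one either; your identification of the obstacle and of the Lubotzky--Shalom result as the likely tool coincides with the paper's own commentary.
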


Similar questions can be studied if one allows the unitary representations to be infinite dimensional. Again, the only known obstruction is $\Z^3 \subset \Gamma$.

\begin{bibdiv}
\begin{biblist}

\bib{barvinok}{book}{
  AUTHOR = {A. Barvinok},
   TITLE = {A course in convexity},
  SERIES = {Graduate Studies in Mathematics},
  VOLUME = {54},
PUBLISHER = {American Mathematical Society},
 ADDRESS = {Providence, RI},
    YEAR = {2002},
   PAGES = {x+366},
    ISBN = {0-8218-2968-8},
 MRCLASS = {52-02 (49N15 52-01 90-02 90C05 90C22 90C25)},
}

\bib{MR2415834}{book}{
   author={Bekka, B.},
   author={de la Harpe, P.},
   author={Valette, A.},
   title={Kazhdan's property (T)},
   series={New Mathematical Monographs},
   volume={11},
   publisher={Cambridge University Press},
   place={Cambridge},
   date={2008},
   pages={xiv+472},
}

\bib{boz}{book}{
   author={Brown, N. P.},
   author={Ozawa, N.},
   title={$C^{\ast}$-algebras and finite-dimensional approximations.},
   series={Graduate Studies in Mathematics},
   volume={88},
   publisher={Amer. Math. Soc.},
   place={Providence, RI},
   date={2008},
}

\bib{choi}{article}{
   author={Choi, M. D.},
   title={The full $C^{\ast}$-algebra of the free group on two generators},
   journal={Pacific J. Math},
   volume={87},
   date={1980},
   number={1},
   pages={41--48},
}

\bib{MR2494309}{article}{
    AUTHOR = {Cimpri{\v{c}}, J.},
     TITLE = {A representation theorem for {A}rchimedean quadratic modules
              on {$*$}-rings},
   JOURNAL = {Canad. Math. Bull.},
  FJOURNAL = {Canadian Mathematical Bulletin. Bulletin Canadien de
              Math\'ematiques},
    VOLUME = {52},
      YEAR = {2009},
    NUMBER = {1},
     PAGES = {39--52},
}

\bib{CMN}{article}{
   author={Cimpri{\v{c}}, J.},
   author={Netzer, T.},
   author={Marschall, M.},
   title={Closures of Quadratic Modules},
   journal={Israel Journal of Mathematics},
   volume={183},
   date={2011},
   number={1},
   pages={445--474},
}



\bib{MR1933721}{article}{
   author={Helton, J.W.},
   title={Positive noncommutative polynomials are sums of squares},
   journal={Ann. of Math. (2)},
   volume={156},
   date={2002},
   number={2},
   pages={675--694},
}

\bib{lubsh}{article}{
   author={Lubotzky, A.},
   author={Shalom, Y.},
   title={Finite representations in the unitary dual and Ramanujan groups},
   journal={Discrete geometric analysis: proceedings of the first JAMS Symposium on Discrete Geometric Analysis, December 12-20, 2002, Sendai, Japan, Contemporary Mathematics},
   volume={347},
   date={2004},
   pages={pp. 173},
}

\bib{rudin}{article}{
   author={Rudin, W.},
   title={The extension problem for positive-definite functions},
   journal={Illinois J. Math.},
   volume={7},
   date={1963},
   pages={532--539},
 }

\bib{MR1823953}{article}{
    AUTHOR = {Powers, V.},
    author ={Scheiderer, C.}
     TITLE = {The moment problem for non-compact semialgebraic sets},
   JOURNAL = {Adv. Geom.},
  FJOURNAL = {Advances in Geometry},
    VOLUME = {1},
      YEAR = {2001},
    NUMBER = {1},
     PAGES = {71--88},
}

\bib{pd}{book}{
    AUTHOR = {A. Prestel and C.N. Delzell},
     TITLE = {Positive polynomials},
    SERIES = {Springer Monographs in Mathematics},
      NOTE = {From Hilbert's 17th problem to real algebra},
 PUBLISHER = {Springer-Verlag},
   ADDRESS = {Berlin},
      YEAR = {2001},
}

\bib{schei1999}{article}{
   author={Scheiderer, C.},
   title={Sums of squares of  regular functions on real algebraic varieties},
   journal={Trans. Am. Math. Soc.},
   volume={352},
   date={1999},
   pages={1039--1069},
}

\bib{scheid}{article}{
   author={Scheiderer, C.},
   title={Sums of squares on real algebraic surfaces},
   journal={Manuscr. math},
   volume={119},
   date={2006},
   pages={395--410},
}

\bib{schm2}{article}{
   author={Schm{\"u}dgen, K.},
   title={The $K$-moment problem for compact semi-algebraic sets},
   journal={Math. Ann.},
   volume={289},
   date={1991},
   number={2},
   pages={203--206},
 }

\bib{MR2500470}{article}{
   author={Schm{\"u}dgen, K.},
   title={Noncommutative real algebraic geometry---some basic concepts and
   first ideas},
   conference={
      title={Emerging applications of algebraic geometry},
   },
   book={
      series={IMA Vol. Math. Appl.},
      volume={149},
      publisher={Springer},
      place={New York},
   },
   date={2009},
   pages={325--350},
}

\bib{schm}{article}{
   author={Schm{\"u}dgen, K.},
   title={private communication},
}

\end{biblist}
\end{bibdiv} 

\end{document}